\documentclass{article}
\usepackage{mathrsfs, amsmath, amsthm, amssymb, bm, mathtools, thm-restate}
\usepackage{graphicx, xcolor, tikz}
\usetikzlibrary{arrows.meta, intersections, calc, shapes}
\usepackage{caption, subcaption}
\captionsetup{subrefformat=parens}
\usepackage{array}
\arraycolsep=1pt
\usepackage{cases}
\makeatletter
\def\th@plain{%
  \upshape 
}
\makeatother

\makeatletter
\renewenvironment{proof}[1][\proofname]{\par
  \pushQED{\qed}%
  \normalfont \topsep6\p@\@plus6\p@\relax
  \trivlist
  \item[\hskip\labelsep
        \bfseries
    #1\@addpunct{.}]\ignorespaces
}{%
  \popQED\endtrivlist\@endpefalse
}
\makeatother

\newtheorem{theorem}{Theorem}[section]

\newtheorem{lemma}{Lemma}[section]

\newtheorem*{conjecture*}{Conjecture}

\usepackage[left=25mm, top=25mm, bottom=25mm, right=25mm]{geometry}
\setlength{\parskip}{0pt}
\usepackage[T1]{fontenc}
\usepackage[inline]{enumitem}
\usepackage[square, numbers, sort&compress]{natbib}

\usepackage[pdftex,%
  bookmarks=true,%
  bookmarksnumbered=true, 
  bookmarksopen=true, 
  plainpages=false,%
  pdfpagelabels,%
  colorlinks=true, 
  linkcolor=black, 
  citecolor=black,%
  anchorcolor=green,
  urlcolor= blue,
  breaklinks=true,
  hyperindex=true]{hyperref}

\newcommand{\ie}{i.e.,\ }
\def\Int(#1){\mathrm{Int}(#1)}
\def\Ext(#1){\mathrm{Ext}(#1)}

\begin{document}
\title{An extension of Thomassen's result on choosability}
\author{Lingxi Li { } \quad Tao Wang\footnote{\tt wangtao@henu.edu.cn; https://orcid.org/0000-0001-9732-1617}\\
{\small Center for Applied Mathematics}\\
{\small Henan University, Kaifeng, 475004, P. R. China}}
\date{}
\maketitle
\begin{abstract}
Thomassen proved that all planar graphs are $5$-choosable. \v{S}krekovski strengthened the result by showing that all $K_{5}$-minor-free graphs are $5$-choosable. Dvo\v{r}\'{a}k and Postle pointed out that all planar graphs are DP-$5$-colorable. In this note, we first improve these results by showing that every $K_{5}$-minor-free or $K_{3, 3}$-minor-free graph is DP-$5$-colorable. In the final section, we further improve these results under the term strictly $f$-degenerate transversal. 
\end{abstract}

\section{Introduction}
Thomassen \cite{MR1290638} proved that all planar graphs are $5$-choosable. \v{S}krekovski \cite{MR1639710} (see also \cite{MR2418108, MR2746713}) extended the result to the class of $K_{5}$-minor-free graphs. Dvo\v{r}\'{a}k and Postle \cite{MR3758240} gave a generalization of list coloring, under the name correspondence coloring, which was called DP-coloring by Bernshteyn, Kostochka, and Pron \cite{MR3686937}. 

Let $G$ be a graph and $L$ be a list assignment for $G$. For each vertex $v \in V(G)$, we associate it with a set $L_{v} = \{v\} \times L(v)$; for each edge $uv \in E(G)$, we associate it with a matching $\mathscr{M}_{uv}$ between $L_{u}$ and $L_{v}$. Let $\mathscr{M} = \bigcup_{uv \in E(G)}\mathscr{M}_{uv}$, and we call $\mathscr{M}$ the {\bf matching assignment} over $L$. The matching assignment $\mathscr{M}$ is called a {\bf $k$-matching assignment} if $L(v) = \{1, 2, \dots, k\}$ for every $v \in V(G)$. A {\bf cover} of $G$ is a graph $H_{L, \mathscr{M}}$ (simply write $H$) meeting two conditions: 
\begin{itemize}
\item the vertex set of $H$ is the disjoint union of $L_{v}$ for all $v \in V(G)$; and
\item the edge set of $H$ is the matching assignment $\mathscr{M}$.
\end{itemize}

Let $G$ be a graph and $H$ be a cover of $G$ over a list assignment $L$. An {\bf $(L, \mathscr{M})$-coloring} of $G$ is an independent set $\mathcal{I}$ of $H$ such that $|\mathcal{I} \cap L_{v}| = 1$ for each $v \in V(G)$. A graph $G$ is {\bf DP-$k$-colorable} if for any list assignment $L(v) \supseteq \{1, 2, \dots, k\}$ and any matching assignment $\mathscr{M}$, it admits an $(L, \mathscr{M})$-coloring. Note that every DP-$k$-colorable graph is $k$-choosable. 

Dvo\v{r}\'{a}k and Postle \cite{MR3758240} have pointed out that all planar graphs are DP-$5$-colorable. We improve the result to the following \autoref{KFIVE}, and we also extend the result for planar graphs to the class of $K_{3, 3}$-minor-free graphs. 
\begin{restatable}{theorem}{KFIVE}\label{KFIVE}
All $K_{5}$-minor-free graphs are DP-$5$-colorable. 
\end{restatable}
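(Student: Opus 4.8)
The plan is to prove the theorem by establishing a stronger precoloring-extension statement and inducting on $|V(G)|$. Precisely, I would prove: \emph{for every $K_5$-minor-free graph $G$, every cover $H$ of $G$ with $|L(v)|\ge 5$ for all $v$, every clique $W$ with $|W|\le 3$, and every ``precoloring'' given by an independent set $\phi$ in $H_{W}$ with $|\phi\cap L_{v}|=1$ for each $v\in W$, the partial coloring $\phi$ extends to an $(L,\mathscr{M})$-coloring of $G$.} Taking $W=\emptyset$ recovers \autoref{KFIVE}. The two engines are a DP-analogue of Thomassen's planar lemma and Wagner's structural description of $K_5$-minor-free graphs as clique-sums (of order at most $3$) of planar graphs and the Wagner graph $V_8$.

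First I would set up the planar engine. Following Thomassen, I would prove: if $G$ is a plane near-triangulation with outer cycle $C$, $H$ is a cover, $v_1v_2$ is an edge of $C$ precolored by an independent pair of $H$, every other vertex of $C$ has $|L|\ge 3$, and every interior vertex has $|L|\ge 5$, then the precoloring extends. The entire argument transfers verbatim from list coloring because each edge $uv$ contributes the matching $\mathscr{M}_{uv}$, so a colored neighbor forbids \emph{at most one} color; in the chord case one splits $C$ and recurses, and in the chordless case one deletes the neighbor $v_p$ of $v_1$, reserves two of its $\ge 2$ admissible colors, deletes the colors matched to them from the $\ge 5$-lists of the interior fan-neighbors (each losing $\le 2$), recurses, and finally extends to $v_p$. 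From this I would derive the forms actually needed: on a plane near-triangulation with all interior lists $\ge 5$, any independent precoloring of a facial triangle, of a facial edge, or of a single boundary vertex extends (the triangle case by deleting one of its vertices and invoking the edge version; the vertex case by first choosing any admissible color on an incident outer edge).

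Next comes the structural induction. As preprocessing I would complete every separator of size $\le 2$ to a clique by adding edges carrying the empty matching; a short rerouting argument shows this preserves $K_5$-minor-freeness, and empty matchings do not affect colorings. In \textbf{Case A}, $G$ has a clique separator $S$ with $|S|\le 3$, giving $G=G_1\cup G_2$ with $G_1\cap G_2=S$. Since any clique of size $\le 3$ is contained in one of the two sides, I may assume $W\subseteq V(G_1)$; I color $G_1$ extending $W$ by induction, which colors $S$ as an independent set of $H_{S}$, and then color $G_2$ extending the now-precolored clique $S$ by induction; the two colorings agree on $S$ and there are no edges between $G_1\setminus S$ and $G_2\setminus S$, so they combine. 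In \textbf{Case B}, $G$ has no clique separator of size $\le 3$; after the preprocessing this forces $G$ to be $3$-connected, so by Wagner's theorem $G$ is planar or $G\cong V_8$. If $G\cong V_8$ then $W$ is a clique of size $\le 2$ (as $V_8$ is triangle-free), and since $V_8$ is $3$-regular hence $3$-degenerate, a greedy extension (each uncolored vertex sees $\le 3$ colored neighbors, each forbidding $\le 1$ color, out of $\ge 5$) finishes. If $G$ is planar and $3$-connected, then a precolored triangle $W$ is non-separating (else it would be a clique $3$-separator, i.e.\ Case A), hence bounds a face; I embed $G$ with $W$ on the outer face, add empty-matching chords to triangulate the interior (this keeps $G$ planar, so $K_5$-minor-free), and invoke the facial-triangle (or facial-edge, or boundary-vertex) extension from the engine.

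The main obstacle is the interface between the two engines, i.e.\ the separator bookkeeping in Case B rather than any single computation: one must correctly fold separating triangles into the clique-separator case so that every triangle reaching the planar engine is genuinely facial, verify that completing small separators and triangulating preserve $K_5$-minor-freeness, and confirm that every step of Thomassen's deletion argument survives the passage from lists to arbitrary matchings. This last point is exactly where DP-coloring could in principle be harder than list coloring, but it is benign here because a matching $\mathscr{M}_{uv}$ forbids at most one color across each edge---precisely the property Thomassen's counting uses---so the $3/5$ list-size budget is maintained throughout.
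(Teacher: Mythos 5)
Your overall strategy---a DP version of Thomassen's near-triangulation lemma combined with Wagner's decomposition and a precoloring-extension induction---is the same as the paper's, and most individual steps are sound: your planar engine is exactly \autoref{NT}, and your Case A is the paper's clique-sum step. The proof breaks, however, at the structural claim in Case B: it is \emph{not} true that a $3$-connected $K_{5}$-minor-free graph with no clique separator of size at most $3$ must be planar or the Wagner graph. The graph $K_{3,3}$ is a counterexample: it is $3$-connected, triangle-free (hence has no clique separator at all, and your preprocessing does nothing to it), $K_{5}$-minor-free, non-planar, and is not the Wagner graph. Such graphs fall into neither your Case A nor the planar/Wagner dichotomy of Case B, so your induction never reaches them. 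The source of the error is that Wagner's characterization by clique-sums of planar graphs and the Wagner graph permits deleting edges of the join clique after summing; equivalently, \autoref{23SUMS} is a statement about \emph{edge-maximal} $K_{5}$-minor-free graphs only. The planarity conclusion you want from connectivity alone requires $4$-connectivity, not $3$-connectivity.

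Moreover, your preprocessing idea cannot be pushed from $2$-separators to $3$-separators to repair this: completing a non-clique $3$-separator to a triangle can create a $K_{5}$-minor. For instance, let $G$ be $K_{3,3}$ with parts $\{x,y,z\}$ and $\{a,b,w\}$ plus the extra edge $ab$; then $G$ is $K_{5}$-minor-free and $\{x,y,z\}$ is a (non-clique) $3$-separator, but adding the three edges on $\{x,y,z\}$ produces a $K_{5}$ on $\{x,y,z,a,b\}$. The correct repair is global rather than local, and it is exactly the paper's first move in the proof of \autoref{KFIVE}: extend $G$ to an edge-maximal $K_{5}$-minor-free spanning supergraph (new edges may carry empty matchings, so this is harmless for DP-coloring), and only then invoke \autoref{23SUMS}, where every $2$-sum or $3$-sum join set is genuinely a clique of the maximal graph. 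With that replacement your induction becomes essentially the paper's \autoref{K5-FREE} and goes through.
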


\begin{restatable}{theorem}{KTHREE}\label{KTHREE}
All $K_{3, 3}$-minor-free graphs are DP-$5$-colorable. 
\end{restatable}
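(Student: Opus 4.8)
The plan is to prove, by induction on $|V(G)|$, the following strengthening of \autoref{KTHREE}: for every $K_{3,3}$-minor-free graph $G$, every list assignment $L$ with $L(v) \supseteq [5]$, every matching assignment $\mathscr{M}$ with cover $H$, and every clique $C$ of $G$ with $|C| \le 2$, each partial coloring of $C$ that is an independent set in $H$ extends to an $(L, \mathscr{M})$-coloring of $G$. Taking $C = \emptyset$ recovers DP-$5$-colorability. The engine of the induction is Wagner's structure theorem: a $3$-connected graph is $K_{3,3}$-minor-free if and only if it is planar or isomorphic to $K_5$; a $K_{3,3}$-minor-free graph that is not $3$-connected admits a separation of order at most $2$; and minors of $K_{3,3}$-minor-free graphs are again $K_{3,3}$-minor-free. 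These facts let me either land on a base block or split $G$ along a small separation.

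For the base of the recursion I would verify the strengthened statement for the two $3$-connected blocks. For $K_5$, given a valid precoloring of a clique $C$ with $|C| \le 2$, I color the remaining at most three vertices greedily in any order: when a vertex is colored it has at most four already-colored neighbors, each excluding at most one of its five colors through the matching, so a color always remains. For a planar block I would invoke the DP-analogue of Thomassen's theorem (the form underlying the Dvo\v{r}\'{a}k--Postle observation): if a $2$-connected plane graph has its outer boundary a cycle, two adjacent boundary vertices $v_1, v_2$ precolored consistently, every other boundary vertex a list of size $\ge 3$, and every interior vertex a list of size $\ge 5$, then the precoloring extends. Since here all lists have size $\ge 5$, any valid precoloring of an edge $uv$ extends once $uv$ is placed on the outer face, which is possible for any edge of a $3$-connected plane block; a single precolored vertex $u$ is handled by first choosing a valid color on an incident edge (its neighbor loses at most one of five colors) and reducing to the edge case, and the empty clique is the plain coloring.

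For the inductive step I take a separation of order at most $2$. If $G$ is disconnected or has a cut vertex $w$, then since $C$ is connected it lies on one side; I color that side with $C$ precolored by induction, which fixes the color of $w$, and then color the other side with the singleton clique $\{w\}$ precolored, again by induction, gluing along the unique shared vertex. The substantial case is a $2$-separation $G = G_1 \cup G_2$ with $G_1 \cap G_2 = \{u, v\}$ and $G$ $2$-connected. Here I set $\mathscr{M}^*_{uv} = \mathscr{M}_{uv}$ if $uv \in E(G)$ and $\mathscr{M}^*_{uv} = \emptyset$ otherwise, and form $G_i' = G_i + uv$ carrying the matching $\mathscr{M}^*_{uv}$ on this (possibly virtual) edge. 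Each $G_i'$ is a minor of $G$, hence $K_{3,3}$-minor-free, and has fewer vertices; moreover $C$, being a connected clique, lies in one part, say $G_1'$. I extend the precoloring of $C$ to a coloring $\phi_1$ of $G_1'$ by induction, read off the colors $(c_u, c_v)$ it assigns to $u, v$, and then color $G_2'$ by induction with the size-$2$ clique $\{u, v\}$ precolored to $(c_u, c_v)$; gluing the two colorings yields the coloring of $G$.

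The point requiring care --- and the one genuine DP-specific difficulty --- is that this last precoloring of $\{u, v\}$ in $G_2'$ must be valid, that is, an independent set in the cover of $G_2'$. This is exactly why I insist that $G_1'$ and $G_2'$ carry the identical matching $\mathscr{M}^*_{uv}$ on $uv$: because $\phi_1$ is a proper coloring of $G_1'$, the pair $(c_u, c_v)$ is non-adjacent in $\mathscr{M}^*_{uv}$, and therefore remains a valid precoloring when transplanted to $G_2'$. In ordinary list coloring one only needs $u, v$ to receive \emph{some} colors on the first side and then extends on the second, but in the DP setting the separator pair carries a matching that both sides must respect simultaneously; synchronizing the two pieces through a common (and, when $uv \notin E(G)$, empty) matching on the virtual edge is the device that overcomes this, and I expect it to be the crux of the write-up.
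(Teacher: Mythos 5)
Your proposal is correct, and it reaches the theorem by a genuinely different implementation of the same underlying strategy: both proofs rest on Wagner's structural characterization of $K_{3,3}$-minor-free graphs together with a Thomassen-type extension theorem for plane graphs (the paper's \autoref{NT}) and a greedy argument for $K_5$. The paper first passes to an edge-maximal $K_{3,3}$-minor-free supergraph (new edges get empty matchings), invokes Wagner's decomposition of such maximal graphs into plane triangulations and copies of $K_5$ glued by $2$-sums, and proves an extension theorem for a precolored $K_2$ (\autoref{K33-FREE}) by a minimal-counterexample argument; because the graph is maximal, every gluing pair $G_1 \cap G_2$ is an actual edge, so the restriction of a coloring of $G_1$ to that edge is automatically a valid precoloring for $G_2$, and the DP-synchronization issue you identify as the crux never surfaces explicitly. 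You instead run a single induction on the original graph, using the $3$-connected form of Wagner's theorem (planar or $K_5$) as the base and raw separations of order at most $2$ in the inductive step, resolving the synchronization locally by installing a virtual edge on each cut pair with a common (possibly empty) matching --- legitimate, since $G_i + uv$ is a minor of $G$ when $G$ is $2$-connected. In essence both proofs exploit the same DP-specific freedom (matchings need not be perfect, so phantom edges with empty matchings are harmless), deployed once globally in the paper and locally at each cut in yours; yours buys a self-contained induction that quotes only the $3$-connected characterization and treats precolored cliques of size $0$, $1$, $2$ uniformly, while the paper's maximalization buys cleaner gluing at the price of citing the full decomposition theorem for maximal graphs. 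Two trivial points to patch in a full write-up: graphs on at most three vertices are neither $3$-connected nor admit a separation of order at most two, so they need a (greedy) base case; and your planar base case is stated for arbitrary $2$-connected plane graphs with cyclic outer boundary, whereas \autoref{NT} is proved for near-triangulations, so you should note that interior faces can first be triangulated using edges with empty matchings.
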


Let $H$ be a cover of $G$, and let $f$ be a function from $V(H)$ to $\{0, 1, 2, \dots\}$. A subset $T \subseteq V(H)$ is called a {\bf transversal} if $|T \cap L_{v}| = 1$ for each $v \in V(G)$. A transversal $T$ of a cover $H$ is {\bf strictly $f$-degenerate} if every nonempty subgraph $\Gamma$ in $H[T]$ contains a vertex $x$ with $\deg_{\Gamma}(x) < f(x)$. In other words, all the vertices of $H[T]$ can be ordered as $x_{1}, x_{2}, \dots, x_{n}$ such that each vertex $x_{i}$ has less than $f(x_{i})$ neighbors on the right hand side. Such an order is an {\bf $f$-removing order}, and the reverse order $x_{n}, x_{n-1}, \dots, x_{1}$ is an {\bf $f$-coloring order}.

By definition, a vertex $x$ can never be chosen in a strictly $f$-degenerate transversal if $f(x) = 0$. Hence, we can add some vertices into $L_{v}$ and define the value of $f$ to be zero on these new vertices, so that all the $L_{v}$ have the same cardinality. On the other hand, it doesn't matter what the labels of the vertices are, so we may assume that $L_{v} = \{v\} \times [s]$, where $s$ is an integer. A cover $H$ together with a function $f$ is called a valued-cover. 

In \autoref{Sec:3}, we strengthen Theorems \ref{KFIVE} and \ref{KTHREE} to \autoref{F-MINOR}. In order to demonstrate how Thomassen's technique in \cite{MR1290638} is extended, we first give a proof for \autoref{KFIVE} in \autoref{Sec:2}, and then give one for \autoref{F-MINOR}, even though Theorems \ref{KFIVE} and \ref{KTHREE} are special cases of \autoref{F-MINOR}. For a function $f$, we use $R_{f}$ to denote the range of $f$. 

\begin{theorem}\label{F-MINOR}
Assume that $G$ is a $K_{5}$-minor-free or $K_{3, 3}$-minor-free graph, and $(H, f)$ is a valued-cover with $R_{f} \subseteq \{0, 1, 2\}$. Then $H$ contains a strictly $f$-degenerate transversal.  
\end{theorem}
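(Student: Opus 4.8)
The plan is to run Thomassen's inductive argument inside the structural decompositions supplied by Wagner's theorems. First I would reduce to edge-maximal graphs: if $G' \subseteq G$ and $(H', f)$ is a valued-cover of $G'$, then padding $H'$ with empty matchings on the edges of $G \setminus G'$ gives a valued-cover $(H, f)$ of $G$ with the same valuation (the defining per-vertex weight bound is unaffected by adding edges), and any strictly $f$-degenerate transversal of $H$ restricts to one of $H'$ because deleting edges can only help the degeneracy condition. So it suffices to treat edge-maximal graphs, which by Wagner's theorems are built by repeated clique-sums: over cliques of order $\le 3$ from plane triangulations and copies of the Wagner graph $V_8$ in the $K_5$-minor-free case, and over cliques of order $\le 2$ from plane triangulations and copies of $K_5$ in the $K_{3,3}$-minor-free case. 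I would root the resulting tree of pieces and process them outward, so that whenever a piece is attached, its shared clique with the already-processed part is a small clique on which the transversal is already fixed.

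The core is a strengthened, Thomassen-style statement for the plane pieces, proved by induction on the number of vertices: if $G$ is a near-triangulation with outer cycle $C$, the transversal is fixed on a distinguished clique of $C$, every other boundary vertex has $f$-weight $w(v) \coloneqq \sum_{x \in L_v} f(x) \ge 3$, and every interior vertex has $w(v) \ge 5$, then the fixed partial transversal extends to a strictly $f$-degenerate transversal of $H$. If $C$ has a chord, I split $G$ along it into two near-triangulations, extend the transversal on the side carrying the fixed clique, and hand the resulting choice on the chord to the other side as its fixed clique. If $C$ is chordless, I delete a boundary vertex $v_p$ neighbouring the fixed clique, reserve two cover-vertices $\alpha, \beta \in L_{v_p}$ avoiding the fixed neighbour's choice, and---this is the point where $f$-values in $\{1,2\}$ must be respected---for each interior neighbour of $v_p$ I decrement (rather than delete) the $f$-value of the at most two cover-vertices matched to $\alpha$ and $\beta$. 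This lowers each such neighbour's weight by at most $2$, keeping it $\ge 3$, while pre-paying exactly the one unit of degeneracy capacity that an edge to $v_p$ would later consume. I then apply induction to $G - v_p$, place $v_p$ at the right end of the removing order (so it has no right-neighbours), and select whichever of $\alpha, \beta$ is compatible with the choice already made at its other boundary neighbour; the decrement guarantees that every interior neighbour newly matched to $v_p$ still has right-degree below its original $f$-value.

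For the clique-sum gluing, when a plane piece is attached along a shared clique $Q$ the transversal is already fixed on $Q$, and I apply the planar statement with $Q$ as the fixed clique on the outer boundary (splitting the piece along $Q$ if $Q$ is separating, and initialising the root piece by fixing an arbitrary boundary edge, which is possible since $w(v)\ge 5$ forces non-empty lists). For order $\le 2$ this fixed clique is an edge, exactly Thomassen's original configuration; for order $\le 3$ a fixed triangle is reduced to a fixed edge by deleting its third vertex and decrementing the $f$-values of that vertex's interior neighbours, as in the chordless step. The two exceptional graphs $V_8$ and $K_5$ have bounded size, so for each I would verify directly---by a finite case analysis on the fixed clique and on the distribution of $f$-values in $\{0,1,2\}$---that every valued-cover extends the fixed clique to a strictly $f$-degenerate transversal. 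Finally, concatenating the per-piece removing orders so that the private vertices of every child piece precede those of its parent yields a global $f$-removing order, hence a strictly $f$-degenerate transversal of $H$, establishing \autoref{F-MINOR} and with it \autoref{KFIVE} and \autoref{KTHREE}.

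The step I expect to be the main obstacle is pinning down the exact boundary invariant that makes the deletion--decrement--reinsertion of $v_p$ go through uniformly over $R_f \subseteq \{0,1,2\}$. With $f \equiv 1$ the bound $w(v) \ge 3$ furnishes three genuinely distinct cover-vertices on the boundary, which is what lets Thomassen reserve two choices and still avoid a neighbour; but a single value-$2$ cover-vertex lets $w(v) \ge 3$ hold with only two cover-vertices present, so reserving two admissible elements for $v_p$ and guaranteeing that a compatible one survives against its second boundary neighbour is no longer automatic. Getting this right means tracking not just the total weight $w(v)$ but the multiplicity of value-$2$ vertices and their matchings, and choosing the inductive invariant (and the end of the removing order at which $v_p$ is inserted) so that a value-$2$ vertex's ability to tolerate one neighbour exactly compensates for its smaller support. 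The same delicacy recurs at a fixed triangle in the $K_5$-minor-free case, where the three fixed vertices may each legitimately retain one neighbour in the transversal, and in the finite verification for $V_8$; controlling this interplay between total $f$-weight and the presence of value-$2$ cover-vertices is the crux, and is exactly where the added strength over ordinary $5$-choosability and DP-$5$-colorability is both exploited and paid for.
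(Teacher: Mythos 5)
Your overall architecture is the same as the paper's: reduce to edge-maximal graphs by padding with empty matchings; invoke Wagner's two decomposition theorems (plane triangulations and the Wagner graph glued along cliques of order at most $3$, resp.\ plane triangulations and $K_{5}$ glued along edges); glue pieces along the shared clique by resetting $f$ to $1$ on the already-chosen cover-vertices and deleting the cover edges over the clique (the paper's \autoref{ORDER} and \autoref{F-EXTEND}); prove a Thomassen-style induction for near-triangulations with boundary weight $\geq 3$ and interior weight $\geq 5$ (\autoref{F-NT}); handle the bounded pieces greedily; and reduce a fixed triangle to a fixed edge by deleting its third vertex (\autoref{F-M}). (Both you and the paper read the statement with the implicit hypothesis $f(v,1)+\dots+f(v,s)\geq 5$ for all $v$, as in \autoref{F-M}; as literally stated, with $f\equiv 0$, there is no transversal at all.)

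However, there is a genuine gap, and you have located it yourself without closing it: the chordless case of the near-triangulation induction. You reserve two cover-vertices $\alpha,\beta\in L_{v_{p}}$ of positive $f$-value that avoid the partner of the choice at $v_{1}$, but under $R_{f}\subseteq\{0,1,2\}$ such a pair need not exist: if $L_{v_{p}}$ carries values $\{1,2\}$, then whichever of its two usable vertices is matched to the choice at $v_{1}$, only one admissible avoiding vertex remains. Your closing paragraph describes exactly this failure and says the invariant must "track the multiplicity of value-$2$ vertices", but no argument is supplied, and this is precisely where the statement goes beyond DP-$5$-coloring. The paper's resolution (Case 2 of \autoref{F-NT}) is: work with residual values $f'$ obtained by \emph{decrementing} (not discarding) the partner of $v_{1}$'s choice, so the residual weight at $v_{p}$ is at least $2$; if only one vertex $x\in L_{v_{p}}$ has $f'(x)>0$, then $f'(x)=f(x)=2$ because the range lies in $\{0,1,2\}$. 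For this single vertex one \emph{zeroes out} (rather than decrements) all its matched partners in the lists of the internal neighbours $u_{1},\dots,u_{m}$ of $v_{p}$ --- each such list loses at most $2$ and so keeps weight $\geq 3$ --- which guarantees that no vertex chosen at any $u_{i}$ conflicts with $x$; then $x$ is inserted into the removing order as the immediate predecessor of the vertex chosen at $v_{p-1}$, so its right-degree is at most $1<2=f(x)$. (In the complementary case the paper reserves two \emph{residual-positive} vertices, not matching-avoiding ones as you do, decrements their partners at the $u_{i}$, and inserts the surviving one third from the right.) Without this case distinction the induction does not go through, so the proposal as written is incomplete.
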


Assume that $G$ is a plane graph and $C$ is a cycle in it. We will use $\Int(C)$ (resp. $\Ext(C)$) to denote the subgraph induced by $V(C)$ and the vertices inside (resp. outside) of $C$. The cycle $C$ is a {\bf separating} cycle of $G$ if both the interior and the exterior of $C$ have at least one vertex. 
\section{DP-5-coloring}\label{Sec:2}
A {\bf plane triangulation} is an embedded plane graph such that each of its faces is bounded by a cycle of length three. A {\bf near-triangulation} is an embedded plane graph such that each bounded face is bounded by a triangle and the unbounded face (outer face) is bounded by a cycle. An {\bf $\bm{\ell}$-sum} of two graphs $G'$ and $G''$ is the graph $G$ such that $G = G' \cup G''$ and $G' \cap G'' = K_{\ell}$. 

The {\bf Wagner graph} is a 3-regular graph with 8 vertices and 12 edges, see \autoref{WagnerFig}. Note that the Wagner graph is non-planar, thus the Wagner graph cannot be a subgraph of a planar graph. 
\begin{figure}[htbp!]
\centering
\begin{tikzpicture}
\def\s{0.707}
\foreach \ang in {1, 2, 3, 4, 5, 6, 7, 8}
{
\def\pointname{v\ang}
\coordinate (\pointname) at ($(\ang*45:\s)$);
}
\draw (v1)--(v2)--(v3)--(v4)--(v5)--(v6)--(v7)--(v8)--cycle;
\foreach \ang in {1, 2, 3, 4, 5, 6, 7, 8}
{
\node[circle, inner sep = 1, fill, draw] () at (v\ang) {};
}
\draw (v1)--(v5);
\draw (v2)--(v6);
\draw (v3)--(v7);
\draw (v4)--(v8);
\end{tikzpicture}
\caption{Wagner graph.}
\label{WagnerFig}
\end{figure}
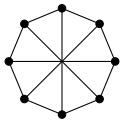

Wagner \cite{MR1513158} gave the following characterization of planar graphs in terms of graph minors. 
\begin{theorem}[Wagner \cite{MR1513158}]
A graph is planar if and only if it does not contain $K_{5}$  or $K_{3, 3}$ as a minor. 
\end{theorem}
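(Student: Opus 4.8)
The plan is to prove the two implications separately, starting with the easy necessity direction. I would show that planarity is closed under taking minors: deleting a vertex or an edge obviously preserves a plane drawing, and contracting an edge $uv$ can be realized by sliding $v$ along the arc that represents $uv$ into $u$, so every minor of a planar graph is planar. Since both $K_{5}$ and $K_{3,3}$ are nonplanar — which I would verify from Euler's formula, using $|E| \le 3|V| - 6$ to rule out $K_{5}$ (as $10 > 9$) and the bipartite refinement $|E| \le 2|V| - 4$ to rule out $K_{3,3}$ (as $9 > 8$) — a planar graph can contain neither as a minor. This settles the ``only if'' direction.

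The substance is the converse: every graph with no $K_{5}$- and no $K_{3,3}$-minor is planar. I would first establish the core statement for $3$-connected graphs, namely that every $3$-connected graph with no $K_{5}$- and no $K_{3,3}$-minor is planar, and then bootstrap. To pass from the $3$-connected case to arbitrary graphs I would use a $2$-cut decomposition: if $G$ has a separation $(G_{1}, G_{2})$ with $|V(G_{1}) \cap V(G_{2})| \le 2$, adjoin the virtual edge across the cut to each side, observe that a forbidden minor appearing in either augmented piece lifts to one in $G$ while $G$ becomes planar once both pieces are, and induct; hence a minimal counterexample may be assumed $3$-connected with at least five vertices. For such $G$ I would invoke Tutte's theorem that every $3$-connected graph on at least five vertices has an edge $e$ whose contraction $G/e$ is again $3$-connected.

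With that edge in hand the induction runs as follows. If $G$ has no $K_{5}$- or $K_{3,3}$-minor, then neither does the minor $G/e$, so $G/e$ is planar by the inductive hypothesis; fix a plane embedding. Writing $e = uv$ and letting $x$ be the contracted vertex, the graph $G - \{u, v\} = (G/e) - x$ is $2$-connected, so the neighbors of $x$ lie on a cycle $C$ bounding the region vacated by $x$. I would then try to redraw $u$ and $v$ inside that region, partitioning $N(u) \cup N(v)$ so that the attachments of $u$ and those of $v$ each occupy a consecutive arc of $C$; whenever such a compatible split exists, $G$ inherits a plane embedding and we are done.

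The main obstacle — and the heart of Wagner's theorem — is the case where no compatible split exists, i.e.\ the neighbors of $u$ and $v$ interleave unavoidably around $C$. Here I would argue that the interleaving pattern itself exhibits a forbidden minor: three mutually interleaving attachment pairs yield a $K_{3,3}$-minor (routing through disjoint arcs of $C$), while the complementary configuration — three common neighbors of $u$ and $v$ together with the edge $uv$ — yields a $K_{5}$-minor. Making this case analysis exhaustive, and using $3$-connectivity to guarantee enough internally disjoint connecting paths along $C$, is where all the genuine difficulty lies; the necessity direction and the reduction to $3$-connectivity are essentially bookkeeping by comparison. As a shorter alternative I would keep in reserve the route through Kuratowski's theorem, deriving Wagner's characterization from the lemma that a graph contains a $K_{5}$- or $K_{3,3}$-minor if and only if it contains a subdivision of one of them — immediate for the maximum-degree-$3$ graph $K_{3,3}$, and for $K_{5}$ resting on the observation that a $K_{5}$-minor without a $K_{5}$-subdivision always produces a $K_{3,3}$-subdivision.
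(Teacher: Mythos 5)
The paper does not actually prove this statement: Wagner's Theorem is quoted as a classical result with a citation to Wagner's 1937 paper, and it is used only as a black box to conclude that the $K_{5}$-minor-free and $K_{3,3}$-minor-free graphs are superclasses of the planar graphs. So there is no in-paper argument to compare against. What you propose is the standard modern proof (essentially the one in Diestel's textbook): necessity via minor-closedness of planarity plus the Euler-formula counts, reduction to the $3$-connected case by splitting along separations of order at most two with virtual edges, Tutte's lemma supplying an edge $e$ with $G/e$ still $3$-connected, and re-insertion of the two endpoints of $e$ into the face vacated by the contracted vertex. That is a legitimate and self-contained route, with two caveats worth flagging. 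First, Tutte's contraction lemma is itself a nontrivial theorem and must be proved or cited. Second, in the $2$-cut reduction the claim that a forbidden minor in an augmented piece ``lifts'' to $G$ needs the virtual edge $uv$ to be realizable as a $u$--$v$ path through the other side; this holds once you have first reduced to blocks, since each side of a $2$-separation of a $2$-connected graph contains such a path, so the reduction should be staged (components, then blocks, then $2$-separations). It is also worth noting that Wagner's own route was the reverse of yours: he proved the clique-sum structure theorems (the very decompositions this paper quotes in Section 2) and derived the minor characterization from Kuratowski's theorem, whereas your induction avoids both the structure theorem and Kuratowski at the cost of Tutte's lemma.

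There is one concrete slip in the hard case, where the neighbors of $u$ and $v$ cannot be split into compatible arcs of the cycle $C$. You describe the $K_{3,3}$ outcome as ``three mutually interleaving attachment pairs,'' but that is both more than you need and not what the negation of a compatible split delivers, so as literally stated your trichotomy is not exhaustive. The correct dichotomy is: if the neighbors of $v$ do not all lie in one closed arc between consecutive neighbors of $u$ on $C$, then either $u$ and $v$ have three common neighbors on $C$, which together with the edge $uv$ and three arcs of $C$ give a subdivided $K_{5}$ with branch vertices $u$, $v$ and the three common neighbors, or there is a \emph{single} crossing pair, i.e.\ neighbors $u_{1}, u_{2}$ of $u$ and $w_{1}, w_{2}$ of $v$ occurring in the cyclic order $u_{1}, w_{1}, u_{2}, w_{2}$ on $C$. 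Already this single crossing yields a subdivided $K_{3,3}$ with branch classes $\{u_{1}, u_{2}, v\}$ and $\{w_{1}, w_{2}, u\}$: the four arcs $u_{1}w_{1}$, $w_{1}u_{2}$, $u_{2}w_{2}$, $w_{2}u_{1}$ of $C$ are internally disjoint and supply the four $u_{i}$--$w_{j}$ paths, while the edges $uu_{1}$, $uu_{2}$, $vw_{1}$, $vw_{2}$ and $uv$ supply the remaining five. Replace your three-pair condition by this one-crossing condition and the case analysis closes; your reserve route through Kuratowski's theorem is likewise sound, but then Kuratowski's theorem needs its own proof, so it saves nothing.
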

By Wagner's Theorem, the class of $K_{5}$-minor-free graphs and the class of $K_{3, 3}$-minor-free graphs are two superclasses of planar graphs. 

A graph $G$ is {\bf maximal $K_{5}$-minor-free} if it does not contain $K_{5}$ as a minor, but $G + xy$ contains a $K_{5}$-minor for every pair nonadjacent vertices $x$ and $y$ in $G$. Wagner \cite{MR1513158} also gave the following characterization of maximal $K_{5}$-minor-free graphs. 
\begin{theorem}[Wagner \cite{MR1513158}]\label{23SUMS}
Every maximal $K_{5}$-minor-free graph can be obtained from the Wagner graph and plane triangulations by recursively $2$-sums or $3$-sums. 
\end{theorem}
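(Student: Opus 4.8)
The plan is to induct on $|V(G)|$, repeatedly splitting $G$ along clique separators of order at most $3$ until only $3$-connected \emph{atoms} remain, and then to classify each atom as a plane triangulation or as the Wagner graph $V_8$.

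For the splitting step I would exploit that $K_5$ is $4$-connected, so a $K_5$-minor cannot be distributed across a separator of order at most $3$: if $(G_1,G_2)$ is a separation of $G$ with $S = G_1 \cap G_2$ and $|S| \le 3$, then $G$ has a $K_5$-minor if and only if one of $G_1, G_2$ has one after $S$ is completed to a clique. Maximality then does the bookkeeping. First, $G$ is connected and has no cutvertex, since across a separation of order at most $1$ an edge between the two sides could be added without creating a $K_5$-minor. Next, every minimal $2$-separator $\{u,v\}$ has $uv \in E(G)$ and every minimal $3$-separator induces a triangle; otherwise a missing edge inside the separator could be added, which by the displayed fact keeps $G$ free of a $K_5$-minor and contradicts maximality. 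Hence whenever $G$ has a separator of order $2$ or $3$ it is the corresponding $2$-sum or $3$-sum of two strictly smaller graphs, each again maximal $K_5$-minor-free, and the induction hypothesis applies. (Note $V_8$ is triangle-free, so it admits no $3$-sum and is attached to the rest only by $2$-sums along its edges, whereas $3$-sums serve to glue triangulations along shared triangles.)

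It remains to handle an atom: a $3$-connected maximal $K_5$-minor-free graph with no separating edge or triangle. The key lemma I would isolate is Wagner's $3$-connected classification, \emph{every $3$-connected $K_5$-minor-free graph is planar or isomorphic to $V_8$}. Granting this, a planar atom is a plane triangulation, for any face of length at least $4$ would admit a chord whose addition preserves planarity, hence $K_5$-minor-freeness (planar graphs have no $K_5$-minor), contradicting maximality; and a nonplanar atom is $V_8$, one of the allowed building blocks. Combining the atomic classification with the clique-sum decomposition then yields the theorem.

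The main obstacle is proving the key lemma, and specifically its nonplanar half. I would argue by induction using the classical fact that every $3$-connected graph on at least five vertices has an edge $e$ with $G/e$ again $3$-connected; since $G/e$ is again $K_5$-minor-free, induction makes it planar or $V_8$, and I would then analyze which uncontractions of $e$ keep the graph $3$-connected, nonplanar, and $K_5$-minor-free. The heart of the argument is the bounded case analysis: a nonplanar $K_5$-minor-free graph contains a $K_{3,3}$-minor by Wagner's Theorem, hence (as $K_{3,3}$ has maximum degree $3$) a $K_{3,3}$-subdivision, and one shows that the attachment of the remaining vertices and bridges is so constrained by the exclusion of $K_5$ that the graph must close up into exactly the $8$-vertex, $3$-regular structure of $V_8$, with no room to insert an extra vertex or chord without either planarizing the graph or creating a $K_5$-minor. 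This case analysis is where essentially all of the difficulty lies.
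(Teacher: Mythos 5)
First, a point of comparison: the paper does not prove this statement at all --- it is quoted as Wagner's theorem with a citation to \cite{MR1513158} --- so your proposal has to be judged on its own merits, and it fails exactly where the theorem is genuinely hard: the handling of $3$-separators. Your displayed ``fact'' (that for a separation $(G_1,G_2)$ with $|S|\le 3$, $G$ has a $K_5$-minor if and only if some $G_i$ with $S$ completed to a clique has one) is true in one direction --- $K_5$-minor-free graphs are closed under clique-sums of order at most $3$ --- but false in the direction you actually use when $|S|=3$. The Wagner graph itself is the counterexample. Write $V_8$ as the cycle $v_1v_2\cdots v_8v_1$ plus the diagonals $v_iv_{i+4}$, and take $S=N(v_1)=\{v_2,v_5,v_8\}$, $G_2$ the star at $v_1$, $G_1=V_8-v_1$. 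Then $V_8$ has no $K_5$-minor, but $G_1$ plus the triangle on $S$ does: contract $v_3v_4$ and $v_6v_7$, and the two resulting vertices together with $v_2,v_5,v_8$ are pairwise adjacent. The intuition that breaks down is this: a missing \emph{edge} of $S$ can be simulated by a path through the other side, but a missing \emph{triangle} on $S$ cannot be simulated by any connected bridge, since contracting a bridge yields a single apex vertex and $K_3$ is not a minor of $K_{1,3}$. Consequently your claim that ``every minimal $3$-separator induces a triangle, by maximality'' is false: $V_8$ is edge-maximal $K_5$-minor-free (adding any chord creates a $K_5$-minor), is $3$-connected and triangle-free, yet $\{v_2,v_5,v_8\}$ is a minimal \emph{independent} $3$-separator. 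Indeed, if this step of yours were correct, your induction would show that plane triangulations alone suffice as building blocks, contradicting the theorem itself, since $V_8$ is neither a plane triangulation nor expressible as a $2$-sum or $3$-sum.

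There is a second error: your ``key lemma'' --- every $3$-connected $K_5$-minor-free graph is planar or isomorphic to $V_8$ --- is false as stated. The graph $K_{3,3}$ is $3$-connected, nonplanar, $K_5$-minor-free, and not $V_8$; worse, $K_{1,1,1,3}$ (i.e., $K_{3,3}$ plus a triangle on one side) is all of these \emph{and} edge-maximal, so restricting the lemma to maximal graphs does not save it. (It is consistent with Wagner's theorem because it is the $3$-sum of three copies of $K_4$ along a common triangle.) The true statements in this vicinity are Wagner's theorem that every $4$-connected graph without a $K_5$-minor is planar, or an atomic classification that carries a no-separating-triangle hypothesis --- a hypothesis your induction cannot deliver, because it rests on the false separator claim above. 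Finally, you yourself defer ``essentially all of the difficulty'' (the identification of $V_8$ via $K_{3,3}$-subdivisions, or the uncontraction analysis) to a sketch; that case analysis is not a technical footnote but the actual content of Wagner's theorem. A correct architecture keeps your clique-sum reduction only for separations of order at most $2$, and for $3$-connected $G$ proves directly: either $G$ is planar (whence edge-maximality makes it a triangulation), or $G$ is nonplanar and then either has a separating triangle or is isomorphic to $V_8$ --- and that dichotomy is where the real work lies.
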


The following theorem and its proof are very similar to that in \cite{MR1290638}, but for completeness we give a complete proof here. 
\begin{theorem}\label{NT}
Assume that $G$ is a near-triangulation such that the outer face is bounded by a cycle $\mathcal{O}  = v_{1}v_{2}\dots v_{p}v_{1}$. Let $L$ be a list assignment of $G$ such that $|L(v)| \geq 3$ for each $v \in V(\mathcal{O})$ and $|L(v)| \geq 5$ for each $v \notin V(\mathcal{O})$. If $\mathscr{M}$ is a matching assignment for $G$ and $R_{0}$ is an $(L, \mathscr{M})$-coloring of $G[\{v_{1}, v_{2}\}]$, then $G$ admits an $(L, \mathscr{M})$-coloring such that its restriction on $G[\{v_{1}, v_{2}\}]$ is $R_{0}$. 
\end{theorem}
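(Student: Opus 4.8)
The plan is to mirror Thomassen's inductive argument, reading ``forbidden colour'' as ``matched partner'' throughout, and to proceed by induction on $|V(G)|$. The hypothesis is exactly the strengthened one already present in the statement: two adjacent outer vertices $v_{1}, v_{2}$ are precoloured by $R_{0}$, every other outer vertex carries a list of size at least $3$, and every interior vertex a list of size at least $5$. Writing $R_{0} = \{(v_{1}, \alpha_{1}), (v_{2}, \alpha_{2})\}$, the base case is $|V(G)| = 3$, the outer triangle with no interior vertex: at most one element of $L_{v_{3}}$ is matched to $(v_{1}, \alpha_{1})$ under $\mathscr{M}_{v_{1}v_{3}}$ and at most one to $(v_{2}, \alpha_{2})$ under $\mathscr{M}_{v_{2}v_{3}}$, so since $|L(v_{3})| \geq 3$ some $(v_{3}, \gamma)$ avoids both matches and extends $R_{0}$.

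First I would dispose of the case in which $\mathcal{O}$ has a chord $v_{i}v_{j}$. Such a chord splits $G$ into two near-triangulations $G_{1}$ and $G_{2}$ with $G_{1} \cap G_{2} = K_{2}$ on $\{v_{i}, v_{j}\}$, and since the chord is non-facial each of $G_{1}, G_{2}$ has strictly fewer vertices than $G$; the lists inherited from $G$ still satisfy the hypothesis on both pieces. Arranging notation so that the precoloured edge $v_{1}v_{2}$ lies in $G_{1}$, I apply the induction hypothesis to $G_{1}$ to obtain a coloring whose restriction to $\{v_{i}, v_{j}\}$ is a valid $(L, \mathscr{M})$-coloring of the edge $v_{i}v_{j}$, because $v_{i}v_{j} \in E(G)$ forces the chosen vertices of $L_{v_{i}}, L_{v_{j}}$ to be non-adjacent in $H$. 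Taking this as the precoloured edge for $G_{2}$ and applying the induction hypothesis again, the two colorings agree on $G_{1} \cap G_{2}$ and glue to a coloring of $G$.

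The main case is when $\mathcal{O}$ is chordless; here I delete the neighbour $v_{p}$ of $v_{1}$ distinct from $v_{2}$. Because $G$ is a near-triangulation, the neighbours of $v_{p}$ are exactly $v_{1}, w_{1}, \dots, w_{t}, v_{p-1}$ in order, with $w_{1}, \dots, w_{t}$ interior; chordlessness together with $|V(G)| > 3$ forces $t \geq 1$ (a degree-two $v_{p}$ would make $v_{1}v_{p-1}$ a chord or collapse $G$ to the base triangle). Thus $G' = G - v_{p}$ is a near-triangulation with outer cycle $v_{1} w_{1} \cdots w_{t} v_{p-1} v_{p-2} \cdots v_{2} v_{1}$, still carrying the precoloured edge $v_{1}v_{2}$. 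Since $|L(v_{p})| \geq 3$ while at most one element of $L_{v_{p}}$ is matched to $(v_{1}, \alpha_{1})$, I fix two colours $c_{1}, c_{2} \in L(v_{p})$ with neither $(v_{p}, c_{1})$ nor $(v_{p}, c_{2})$ matched to $(v_{1}, \alpha_{1})$. I then shrink each list $L(w_{i})$ by deleting the at most two colours whose vertices in $L_{w_{i}}$ are matched to $(v_{p}, c_{1})$ or $(v_{p}, c_{2})$ under $\mathscr{M}_{v_{p} w_{i}}$; as $|L(w_{i})| \geq 5$, the reduced lists still have size at least $3$, so the induction hypothesis applies to $G'$.

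Finally I extend the resulting coloring of $G'$ to $v_{p}$. By construction no $(w_{i}, c(w_{i}))$ is matched to $(v_{p}, c_{1})$ or $(v_{p}, c_{2})$, and neither $c_{1}$ nor $c_{2}$ is matched to $(v_{1}, \alpha_{1})$; the only remaining neighbour is $v_{p-1}$, and $\mathscr{M}_{v_{p} v_{p-1}}$ matches at most one of $(v_{p}, c_{1}), (v_{p}, c_{2})$ to $(v_{p-1}, c(v_{p-1}))$. Hence at least one of $c_{1}, c_{2}$ can be assigned to $v_{p}$ without creating an edge of $H$ inside the transversal, completing the extension. The part demanding care is precisely this matching bookkeeping: in the list setting a colour is forbidden by a single neighbour, whereas here each neighbour forbids at most one element of the current list through its own matching, so the accounting ``delete at most two, keep at least three'' must be checked edge by edge rather than taken for granted. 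Everything else is a faithful transcription of Thomassen's induction.
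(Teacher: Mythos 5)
Your proposal is correct and follows essentially the same route as the paper: the same induction, the same chord-splitting step (colour the side containing $v_{1}v_{2}$ first, then reuse the coloured chord as the precoloured edge for the other side), and the same chordless step of reserving two colours of $L(v_{p})$ compatible with $v_{1}$, deleting their matched partners from the lists of the interior neighbours of $v_{p}$, and finishing at $v_{p}$ against $v_{p-1}$. The extra bookkeeping you supply (validity of the chord restriction as a precolouring, the degree count at $v_{p}$) only makes explicit what the paper leaves implicit.
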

\begin{proof}
The assertion is proved by induction on $|V(G)|$. When $G$ has only three vertices, $G = \mathcal{O} = K_{3}$ and the assertion is obvious. So we can assume that $|V(G)| \geq 4$ and the assertion is true for smaller graphs. Suppose that $\mathcal{O}$ has a chord $v_{i}v_{j}$. It follows that $v_{i}v_{j}$ lies in two cycles $C_{1}$ and $C_{2}$ of $\mathcal{O} + v_{i}v_{j}$. Let $v_{1}v_{2}$ lie in $C_{1}$. Applying the induction hypothesis to $\Int(C_{1})$, $R_{0}$ can be extended to an $(L, \mathscr{M})$-coloring of $\Int(C_{1})$. After $v_{i}$ and $v_{j}$ are colored, it can be further extended to an $(L, \mathscr{M})$-coloring of $\Int(C_{2})$. This  yields a desired $(L, \mathscr{M})$-coloring of $G$. 

So we can assume that $\mathcal{O}$ has no chord. Let $v_{1}, u_{1}, u_{2}, \dots, u_{m}, v_{p-1}$ be the neighbors of $v_{p}$ in a natural cyclic order around $v_{p}$. Since all the bounded faces of $G$ are bounded by triangles and $\mathcal{O}$ has no chord, $P = v_{1}u_{1}u_{2} \dots u_{m}v_{p-1}$ is a path and $\mathcal{O}' = P \cup (\mathcal{O} - v_{p})$ is a cycle. Let $j$ and $\ell$ be two distinct elements in $L(v_{p})$ which do not conflict with the color of $v_{1}$ under the matching $\mathscr{M}_{v_{1}v_{p}}$. Now define $L'(v) = L(v)$ for every $v \notin \{u_{1}, u_{2}, \dots, u_{m}, v_{p}\}$, for $1 \leq i \leq m$, define $L'(u_{i})$ from $L(u_{i})$ by deleting the neighbors of $j, \ell \in L(v_{p})$ under the matching $\mathscr{M}_{v_{p}u_{i}}$. It is easy to check that $|L'(v)| \geq 3$ for all $v \in V(\mathcal{O}')$ and $|L'(v)| \geq 5$ for all $V(G) - \{v_{p}\} - V(\mathcal{O}')$. Applying the induction hypothesis to $\mathcal{O}'$ and its interior and the new list $L'$, we have an $(L', \mathscr{M})$-coloring for $G - v_{p}$. There is at least one color in $\{j, \ell\} \subset L(v_{p})$ which do not conflict with the color of $v_{p-1}$ under $\mathscr{M}_{v_{p-1}v_{p}}$, so we can assign it to the vertex $v_{p}$. This completes the proof. 
\end{proof}

\begin{theorem}\label{K5-FREE}
Assume that $G$ is a maximal $K_{5}$-minor-free graph. If $K$ is a subgraph of $G$ isomorphic to $K_{2}$ or $K_{3}$, then every DP-$5$-coloring $\varphi$ of $K$ can be extended to a DP-$5$-coloring of $G$. 
\end{theorem}
\begin{proof}
Suppose to the contrary that $G$ is a counterexample with $|V(G)|$ as small as possible. 

Assume that $G$ is a plane triangulation and $K$ is a separating $3$-cycle of $G$. Note that $\Int(K)$ and $\Ext(K)$ are both plane triangulations and maximal $K_{5}$-minor-free graphs. By minimality, every DP-$5$-coloring $\varphi$ of $K$ can be extended to a DP-$5$-coloring $\varphi_{1}$ of $\Int(K)$ and a DP-$5$-coloring $\varphi_{2}$ of $\Ext(K)$. Combining $\varphi_{1}$ and $\varphi_{2}$ yields a DP-$5$-coloring of $G$, a contradiction. 

Assume that $G$ is a plane triangulation and $K = [x_{1}x_{2}x_{3}]$ bounds a $3$-face. Note that $G$ has at least four vertices. We can redraw the plane triangulation such that $K$ is the boundary of the outer face. Note that $G - x_{3}$ is a near-triangulation. Since $x_{3}$ on $K$ is precolored, every uncolored vertex incident with the outer face of $G - x_{3}$ has at least four admissible colors other than $\varphi(x_{3})$. Applying \autoref{NT} to $G - x_{3}$, we obtain a DP-$5$-coloring of $G$ whose restriction on $K$ is the precoloring $\varphi$. 

Assume that $G$ is a plane triangulation and $K = y_{1}y_{2}$. We can further assume that $y_{1}y_{2}$ is incident with a $3$-face $[y_{1}y_{2}y_{3}]$. Clearly, the precoloring of $K$ can be extended to a DP-$5$-coloring of $G[y_{1}, y_{2}, y_{3}]$, and we can reduce the problem to the previous case. 

If $G$ is the Wagner graph, then we can greedily extend the precoloring of $K$ to a DP-$5$-coloring of $G$ since $G$ is $3$-regular. 

By \autoref{23SUMS}, we can assume that $G$ is a $2$-sum or $3$-sum of two maximal $K_{5}$-minor-free graphs $G_{1}$ and $G_{2}$ with $K \subset G_{1}$. By minimality, the precoloring $\varphi$ of $K$ can be extended to a DP-$5$-coloring $\varphi_{1}$ of $G_{1}$. By minimality once again, we can extended the restriction of $\varphi_{1}$ on $G_{1} \cap G_{2}$ to $G_{2}$. This yields a DP-$5$-coloring of $G$ whose restriction on $K$ is the precoloring $\varphi$.   
\end{proof}

Now, we can easily prove \autoref{KFIVE}. 
\KFIVE*
\begin{proof}
Since every $K_{5}$-minor-free graph is a spanning subgraph of a maximal $K_{5}$-minor-free graph, it suffices to prove the result for maximal $K_{5}$-minor-free graphs. We can first color two adjacent vertices in $G$, and extend the coloring to the whole graph according to \autoref{K5-FREE}.
\end{proof}

Wagner \cite{MR1513158} also gave a characterization of maximal $K_{3, 3}$-minor-free graphs by $2$-sums. 
\begin{theorem}[Wagner \cite{MR1513158}]
Every maximal $K_{3, 3}$-minor-free graph can be obtained from the complete graph $K_{5}$ and plane triangulations by recursively $2$-sums. 
\end{theorem}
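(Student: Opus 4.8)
The plan is to follow the classical route: reduce to the $3$-connected building blocks by decomposing along small cuts, then classify those blocks. Write $G$ for a maximal $K_{3,3}$-minor-free graph, and record first a localization lemma: if $\{a,b\}$ is a $2$-cut splitting a graph as $G_{1}\cup G_{2}$ with $G_{1}\cap G_{2}=\{a,b\}$, and $H$ is $3$-connected on at least four vertices, then $H$ is a minor of the graph if and only if $H$ is a minor of $G_{1}+ab$ or of $G_{2}+ab$; the point is that a $3$-connected $H$ has no $2$-separation, so a branch decomposition cannot straddle the cut except through the virtual edge $ab$, while the converse uses a path joining $a$ to $b$ on the opposite side. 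Since $K_{3,3}$ is $2$-connected, the analogous (easier) statements at $0$- and $1$-cuts show that adding an edge between two components or across a cut vertex cannot create a $K_{3,3}$ minor, so maximality forces $G$ to be $2$-connected. Applying the lemma with $H=K_{3,3}$ then shows that each $G_{i}+ab$ is again $K_{3,3}$-minor-free, and that if some $2$-cut $\{a,b\}$ had $ab\notin E(G)$ then $G+ab$ would still be $K_{3,3}$-minor-free, contradicting maximality. Hence every $2$-cut of $G$ already carries its edge, splitting along it is a genuine $2$-sum, and the lemma also shows both summands are again maximal $K_{3,3}$-minor-free.

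Iterating, $G$ is built by recursive $2$-sums from its $3$-connected pieces, and the whole problem reduces to the step I expect to be the main obstacle: classifying the maximal $K_{3,3}$-minor-free graphs with no $2$-cut, i.e. proving that a $3$-connected $K_{3,3}$-minor-free graph is planar or isomorphic to $K_{5}$. By the Wagner planarity theorem quoted above, a nonplanar such graph must contain a $K_{5}$ minor (it cannot contain $K_{3,3}$), so it suffices to show that a $3$-connected graph with a $K_{5}$ minor but no $K_{3,3}$ minor equals $K_{5}$. I would prove this by induction on $|V(G)|$, using Thomassen's theorem that every $3$-connected graph on at least five vertices has an edge $e$ whose contraction $G/e$ is again $3$-connected. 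Since $G/e$ inherits $K_{3,3}$-minor-freeness, if $G/e$ is nonplanar the induction gives $G/e\cong K_{5}$, forcing $|V(G)|=6$ and reducing to a finite check that every $3$-connected vertex-split of $K_{5}$ contains a $K_{3,3}$ minor; if instead $G/e$ is planar while $G$ is not, then in the unique (by Whitney) planar embedding of $G/e$ the two split vertices interleave around the contracted vertex, and extracting the interleaving pattern exhibits a $K_{3,3}$ minor. Either way we reach a contradiction unless $|V(G)|=5$, that is $G\cong K_{5}$.

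It then remains to upgrade the planar pieces using maximality. A $3$-connected planar piece is maximal $K_{3,3}$-minor-free, and if some face of its embedding had length at least four we could insert a diagonal through that face; the result stays planar, hence $K_{3,3}$-minor-free, contradicting maximality. So every face is a triangle and the piece is a plane triangulation. The nonplanar pieces are copies of $K_{5}$, which is trivially maximal $K_{3,3}$-minor-free, being complete and having only five vertices, too few to host the six branch sets of a $K_{3,3}$ minor. Reassembling, $G$ is obtained from plane triangulations and copies of $K_{5}$ by the recursive $2$-sums produced in the decomposition, and termination is immediate since each split strictly decreases the size of the largest piece. The only genuinely substantial ingredient is the $3$-connected classification; the connectivity bookkeeping and the maximality upgrades are routine once the localization lemma is in hand.
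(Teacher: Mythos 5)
The paper offers no proof of this statement at all; it is imported verbatim from Wagner \cite{MR1513158}, so your proposal can only be measured against the classical argument, whose route you have indeed reconstructed: decompose along $2$-cuts via the localization lemma, then classify the $3$-connected pieces (this classification is Hall's theorem: a $3$-connected graph with no $K_{3,3}$ minor is planar or $K_{5}$). Most of your scaffolding checks out. The localization lemma is standard and you apply it correctly to get $2$-connectivity, to force every $2$-cut to span an edge, and to show both summands are again maximal; termination is fine; the triangulation upgrade by maximality is fine; and your finite check in case (a) genuinely succeeds: if $G/e\cong K_{5}$ with split pair $u,v$ and former neighbors $a,b,c,d$, then since $N(u)\cup N(v)\supseteq\{a,b,c,d\}$ and $3$-connectivity gives each of $u,v$ at least two neighbors there, one can always choose disjoint pairs $\{\alpha,\beta\}\subseteq N(u)$ and $\{\gamma,\delta\}\subseteq N(v)$, and then $\bigl(\{u,\gamma,\delta\},\{v,\alpha,\beta\}\bigr)$ is a $K_{3,3}$ subgraph because $a,b,c,d$ induce a $K_{4}$ and $uv$ is an edge.

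The genuine gap is in case (b) of your classification, and it sits exactly at the theorem's hard core. Your claim that when $G$ is nonplanar but $G/e$ is planar ``extracting the interleaving pattern exhibits a $K_{3,3}$ minor'' is false as a local statement. Put the neighbors of the split vertices $u,v$ on the cycle $C$ bounding the face of $(G/e)-v_{e}$ that contained $v_{e}$. The obstruction to re-embedding $G$ is one of two patterns: (i) four vertices in cyclic order $a,c,b,d$ on $C$ with $a,b\in N(u)\setminus\{v\}$ and $c,d\in N(v)\setminus\{u\}$, which does give a $K_{3,3}$ subdivision as you say; or (ii) $u$ and $v$ share three common neighbors $x,y,z$ on $C$, which yields only a subdivision of $K_{5}$ (branch vertices $u,v,x,y,z$, with arcs of $C$ supplying the edges among $x,y,z$). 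Pattern (ii) is not a contradiction by itself: a subdivision of $K_{5}$, taken alone, has no $K_{3,3}$ minor --- for instance, $K_{5}$ with one subdivided edge has six vertices, one of degree two, and a $K_{3,3}$ minor in a six-vertex graph would have to be a subgraph with minimum degree three. So under your hypothesis (no $K_{3,3}$ minor, but $K_{5}$ minors allowed), case (ii) must be killed globally, using $3$-connectivity together with $|V(G)|\geq 6$: some vertex or attachment lies outside the five branch vertices, and one must route disjoint paths (Menger/fan arguments) into the $TK_{5}$ and do a case analysis to convert it into a $TK_{3,3}$. That is precisely the content of Hall's lemma that a $3$-connected graph on at least six vertices containing a subdivision of $K_{5}$ also contains a subdivision of $K_{3,3}$ --- which is, up to the planarity reduction, the very statement your induction is trying to establish, so you cannot wave at it as routine. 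The argument is fixable (and, once supplied, also subsumes your finite check in case (a)), but as written your sketch proves only the easy interleaving pattern and silently skips the one that carries the theorem's actual difficulty.
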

Since the proof of the following result is similar to that in \autoref{K5-FREE}, we leave it as an exercise to the readers. 
\begin{theorem}\label{K33-FREE}
Assume that $G$ is a maximal $K_{3, 3}$-minor-free graph. If $K$ is a subgraph of $G$ isomorphic to $K_{2}$, then every DP-$5$-coloring of $K$ can be extended to a DP-$5$-coloring of $G$. 
\end{theorem}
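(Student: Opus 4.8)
The plan is to run the same minimal-counterexample argument used for \autoref{K5-FREE}, exploiting the fact that Wagner's characterization of maximal $K_{3,3}$-minor-free graphs produces them from only two kinds of blocks---plane triangulations and $K_{5}$---glued by $2$-sums. Because every $2$-sum identifies just a copy of $K_{2}$, a triangle precoloring is never carried across a gluing; this is exactly why it is enough to treat $K \cong K_{2}$ (the proof of \autoref{K5-FREE} had to include $K_{3}$ only because $3$-sums identify triangles). So I would take a counterexample $G$ with $|V(G)|$ minimal, note $|V(G)| \ge 3$ (otherwise $G = K$ and there is nothing to prove), and split into the three structural cases supplied by the characterization.

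For the case in which $G$ is a plane triangulation with $K = y_{1}y_{2}$, I would pick a $3$-face $[y_{1}y_{2}y_{3}]$ containing the edge $y_{1}y_{2}$ (every edge of a triangulation lies on a triangle) and redraw $G$ so that this face is the outer face. Then $G$ is a near-triangulation whose outer cycle is $y_{1}y_{2}y_{3}$, with the precolored vertices $y_{1}, y_{2}$ adjacent on the boundary, the remaining boundary vertex $y_{3}$ carrying a list of size $5 \ge 3$, and all interior vertices carrying lists of size $5$. Then \autoref{NT} applies verbatim and extends the precoloring to all of $G$, contradicting the choice of $G$.

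For the case $G = K_{5}$, I would extend greedily, just as in the Wagner-graph case of \autoref{K5-FREE}: with the two endpoints of $K$ colored, I color the remaining vertices $v_{3}, v_{4}, v_{5}$ in turn, observing that when $v_{i}$ is colored it has $i-1 \le 4$ already-colored neighbors, each forbidding at most one color through its matching, so at least $6-i \ge 1$ admissible colors remain. For the last case, $G = G_{1} \cup G_{2}$ is a $2$-sum with $G_{1} \cap G_{2} \cong K_{2}$ and both pieces maximal $K_{3,3}$-minor-free and strictly smaller; taking $K \subseteq G_{1}$, minimality first extends $\phi$ to a DP-$5$-coloring $\phi_{1}$ of $G_{1}$, and then extends the restriction of $\phi_{1}$ to the shared $K_{2} \subseteq G_{2}$ to a DP-$5$-coloring $\phi_{2}$ of $G_{2}$; the two colorings agree on the shared edge and so glue to a DP-$5$-coloring of $G$, the final contradiction.

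The step I expect to require the most care is the base case $G = K_{5}$, which is genuinely outside the scope of \autoref{NT} since $K_{5}$ is nonplanar, and so must be handled by the direct degeneracy count above rather than by the near-triangulation machinery. The one conceptual point worth double-checking is the $2$-sum reduction: one must verify that throughout the recursion only $K_{2}$-precolorings ever arise, which is what makes the $K_{2}$-only statement of \autoref{K33-FREE} self-sufficient and is the feature that distinguishes this argument from the proof of \autoref{K5-FREE}.
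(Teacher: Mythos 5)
Your proposal is correct and follows essentially the route the paper intends: \autoref{K33-FREE} is left as an exercise ``similar to \autoref{K5-FREE}'', and your argument is precisely that template adapted to Wagner's $2$-sum characterization --- minimal counterexample, plane-triangulation case via \autoref{NT}, the complete graph $K_{5}$ handled greedily in place of the Wagner graph, and the $2$-sum case by applying minimality twice. If anything, your plane-triangulation case is slightly cleaner than the template, since with $K \cong K_{2}$ you can apply \autoref{NT} directly to the redrawn triangulation (leaving $y_{3}$ uncolored with list size $5 \geq 3$) instead of first extending the precoloring to a triangle and then deleting the third vertex.
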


\KTHREE*
\begin{proof}
Since each $K_{3, 3}$-minor-free graph is a spanning subgraph of a maximal $K_{3, 3}$-minor-free graph, it suffices to show the result for maximal $K_{3, 3}$-minor-free graphs. We can first color two adjacent vertices in $G$, and further extend the precoloring to the whole graph according to \autoref{K33-FREE}. 
\end{proof}

\section{Strictly $f$-degnerate transversal}\label{Sec:3}
In this section, we extend the results on DP-$5$-coloring to particular strictly $f$-degenerate transversal. The following two lemmas were presented by Nakprasit and Nakprasit \cite[Lemma 2.3]{MR4114324} with a different term. 

For a vertex subset $K$ of $V(G)$, or a subgraph $K$ of $G$, we use $H_{K}$ to denote the cover restricted on $K$, \ie $H_{K} \coloneqq H[\bigcup_{v \in K} L_{v}]$. 

\begin{lemma}\label{ORDER}
Assume that $G$ is a graph and $K$ is a subgraph of $G$. Let $(H, f)$ be a valued cover, and $T$ be a transversal of $H_{K}$ such that $H[T]$ has no edges and $f(x) = 1$ for each $x \in T$. If $T$ can be extended to a strictly $f$-degenerate transversal $T'$ of $H$, then there exists an $f$-removing order of $T'$ such that the vertices in $T$ are on the rightest of the order. 
\end{lemma}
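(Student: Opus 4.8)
The plan is to build the desired order by a greedy peeling argument, deleting vertices one at a time off the \emph{left} end of the order being constructed, and arranging matters so that every vertex of $T' \setminus T$ is deleted before any vertex of $T$. Recall that producing an $f$-removing order amounts to repeatedly selecting, from the currently remaining induced subgraph $H[S]$, a vertex $x$ with $\deg_{H[S]}(x) < f(x)$, placing $x$ to the left of all remaining vertices, and deleting it. If at every stage with $T \subseteq S \subseteq V(H[T'])$ and $S \setminus T \neq \varnothing$ I can always locate such a witness $x$ inside $S \setminus T$, then all of $T' \setminus T$ will be deleted before $T$ is touched, leaving $T$ in the rightmost positions; and once $S = T$, the vertices of $T$ may be ordered arbitrarily, since $H[T]$ is edgeless and $f \equiv 1$ on $T$ gives $0 < 1$.

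Thus the heart of the matter is the claim: for every $S$ with $T \subseteq S \subseteq V(H[T'])$ and $S \setminus T \neq \varnothing$, the graph $H[S]$ has a vertex $x \in S \setminus T$ with $\deg_{H[S]}(x) < f(x)$. The obstacle is that applying strict $f$-degeneracy of $T'$ directly to $H[S]$ only hands back \emph{some} witness, which may well lie in $T$ (an isolated $T$-vertex would qualify since $f \equiv 1$ there). To force the witness off of $T$, I would not feed the whole of $S$ to the degeneracy condition, but instead first trim away from $T$ exactly those vertices having no neighbor in $S \setminus T$. Concretely, set $T_{0} = \{t \in T : N_{H}(t) \cap (S \setminus T) \neq \varnothing\}$ and apply strict $f$-degeneracy of $T'$ to the nonempty subgraph $\hat{\Gamma} = H[(S \setminus T) \cup T_{0}]$.

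The payoff of this choice is twofold. First, every $t \in T_{0}$ satisfies $\deg_{\hat{\Gamma}}(t) \geq 1 = f(t)$ by construction, so no vertex of $T_{0}$ can be a degeneracy witness; hence the witness $x$ supplied by strict $f$-degeneracy must lie in $S \setminus T$. Second, the degree bookkeeping transfers cleanly back to $H[S]$: because $H[T]$ is edgeless, a vertex $x \in S \setminus T$ has all its $T$-neighbors inside $S \setminus T$'s neighborhood, and each such neighbor belongs to $T_{0}$ by definition, so $N_{H}(x) \cap T = N_{H}(x) \cap T_{0}$ and therefore $\deg_{H[S]}(x) = \deg_{\hat{\Gamma}}(x) < f(x)$. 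This establishes the claim.

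With the claim in hand, the greedy peeling described in the first paragraph runs to completion: repeatedly deleting witnesses drawn from $S \setminus T$ exhausts $T' \setminus T$ first, after which the edgeless set $T$ is appended in any order. The resulting sequence is an $f$-removing order of $T'$ with the vertices of $T$ occupying the rightmost positions, as required. I expect the only genuine subtlety to be the choice of $\hat{\Gamma}$ — trimming $T$ down to $T_{0}$ rather than keeping all of $T$ — since this single device simultaneously rules out witnesses in $T$ and keeps the degree counts in $\hat{\Gamma}$ and $H[S]$ in exact agreement.
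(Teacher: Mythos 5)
Your proof is correct, but it follows a different route from the paper's. The paper's argument is a rearrangement of an order that already exists: since $T'$ is strictly $f$-degenerate, it has \emph{some} $f$-removing order $S'$; because $f(x)=1$ on $T$, every vertex of $T$ has no neighbor to its right in $S'$ (equivalently, all its neighbors lie to its left), so moving the vertices of $T$ to the rightmost positions cannot create a new right-neighbor for any vertex of $T' \setminus T$, and since $H[T]$ is edgeless the moved vertices still have zero right-neighbors; hence the rearranged order works. You instead build the order from scratch by greedy peeling, and your key device --- trimming $T$ down to $T_{0}$, the set of $T$-vertices with a neighbor in $S \setminus T$, before invoking strict $f$-degeneracy on $\hat{\Gamma} = H[(S\setminus T) \cup T_{0}]$ --- correctly forces the degeneracy witness into $S \setminus T$ while keeping $\deg_{\hat{\Gamma}}(x) = \deg_{H[S]}(x)$ for such $x$; this step is sound and is exactly what a naive application of degeneracy to $H[S]$ would miss. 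The trade-off: the paper's proof is shorter and works entirely at the level of orders, while yours works directly with the subgraph formulation of strict $f$-degeneracy and is constructive in a stepwise sense, at the cost of needing the $T_{0}$ trimming argument and an invariant to maintain; both proofs use precisely the same two hypotheses ($f \equiv 1$ on $T$ and $H[T]$ edgeless), so neither is more general than the other.
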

\begin{proof}
Let $S'$ be an $f$-removing order of $T'$. Since $f(x) = 1$ for each $x \in T$, every vertex in $T$ has no neighbor on the right of the order $S'$, so we can move all the vertices in $T$ to the rightest of the order. In other words, we can delete all the vertices in $T$ from the order $S'$ and put the vertices in $T$ on the right side of all the other vertices of $S'$. Observe that the resulting order satisfies the desired condition. 
\end{proof}

\begin{lemma}\label{F-EXTEND}
Assume that $G = G_{1} \cup G_{2}$, $V(G_{1} \cap G_{2}) = K$ and $G_{1}$ is an induced subgraph of $G$. Let $(H, f)$ be a valued cover of $G$, and $H_{i}$ be the restriction of $H$ on $G_{i}$ for $i \in \{1, 2\}$. If $R$ is a strictly $f$-degenerate transversal of $H_{1}$, and $R \cap H_{K}$ can be extended to a strictly $f^{*}$-degenerate transversal $R^{*}$ of $H^{*}$, where $H^{*}$ is obtained from $H_{2}$ by deleting all the edges in $H_{K}$, and $f^{*}$ is obtained from $f$ by defining $f^{*}(x) = 1$ for each $x \in R \cap H_{K}$, then $R \cup R^{*}$ must be a strictly $f$-degenerate transversal of $H$. 
\end{lemma}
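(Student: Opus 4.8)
The plan is to construct an explicit $f$-removing order of $R \cup R^{*}$ by concatenating $f$-removing orders of the two pieces. First I would check that $R \cup R^{*}$ is a transversal of $H$: for a vertex $v \in V(G_{1}) \setminus K$ only $R$ meets $L_{v}$, for $v \in V(G_{2}) \setminus K$ only $R^{*}$ meets $L_{v}$, and for $v \in K$ the two transversals agree because $R^{*}$ extends $R \cap H_{K}$; hence exactly one element of each $L_{v}$ is selected. Write $B = R \cap H_{K} = R^{*} \cap H_{K}$, let $A = R^{*} \setminus B$ be the selected vertices over $V(G_{2}) \setminus K$, and $C = R \setminus B$ those over $V(G_{1}) \setminus K$, so that $R \cup R^{*} = A \sqcup B \sqcup C$.

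The key structural observation is that $H$ has no edge joining $A$ to $C$: an edge of $H$ between $L_{u}$ and $L_{v}$ forces $uv \in E(G) = E(G_{1}) \cup E(G_{2})$, and an edge with $u \in V(G_{1}) \setminus V(G_{2})$ and $v \in V(G_{2}) \setminus V(G_{1})$ lies in neither $E(G_{1})$ nor $E(G_{2})$, so none exists. Two consequences I would record: (i) because $G_{1}$ is induced, the subgraph of $H[R \cup R^{*}]$ spanned by $B \cup C = R$ coincides with $H_{1}[R]$; and (ii) every edge of $H[R \cup R^{*}]$ incident with $A$ already lies in $H^{*}[R^{*}]$, since its endpoint in $V(G_{2}) \setminus V(G_{1})$ forces the edge into $E(G_{2})$, and passing from $H_{2}$ to $H^{*}$ deletes only $H_{K}$-edges, which touch no vertex of $A$. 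Thus the neighbors of any $x \in A$ are the same in $H[R \cup R^{*}]$ and in $H^{*}[R^{*}]$, while all edges of $H[R \cup R^{*}]$ not incident with $A$ live inside $H_{1}[R]$.

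Next I would fix the two orders. Since $R$ is a strictly $f$-degenerate transversal of $H_{1}$, take an $f$-removing order $\sigma_{1}$ of $R$ in $H_{1}[R]$. For the second piece I apply \autoref{ORDER} with the cover $H^{*}$, the function $f^{*}$, the transversal $T = B$, and its extension $T' = R^{*}$: the hypotheses hold because $H^{*}[B]$ has no edges (all $H_{K}$-edges were deleted) and $f^{*}(x) = 1$ on $B$, so there is an $f^{*}$-removing order $\sigma_{2}$ of $R^{*}$ with the vertices of $B$ at the right end. I then form $\tau$ by listing the vertices of $A$ in their $\sigma_{2}$-order and following them with all of $R$ in its $\sigma_{1}$-order.

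Finally I would verify that $\tau$ is an $f$-removing order by bounding, for each $x$, the number of its neighbors lying to its right. If $x \in R = B \cup C$, the whole block $A$ sits to its left, so by (i) its right-neighbors in $\tau$ are exactly its right-neighbors in $\sigma_{1}$, hence fewer than $f(x)$. If $x \in A$, then by the no-$A$-$C$-edge fact together with (ii) its neighbors are confined to $A \cup B$ and coincide with those in $H^{*}[R^{*}]$; moreover $B$ sits entirely to the right of $A$ in both $\tau$ and $\sigma_{2}$, so the set of right-neighbors of $x$ is identical in the two orders, whence its size is fewer than $f^{*}(x) = f(x)$ (using $x \notin B$). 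The main point to get right — and the reason \autoref{ORDER} is invoked rather than an arbitrary $f^{*}$-removing order of $R^{*}$ — is exactly this last matching of counts for $x \in A$: it works only because every $B$-neighbor of $x$ is guaranteed to lie after $x$, which can fail for a general order of $R^{*}$. Once both cases are checked, $\tau$ witnesses that $R \cup R^{*}$ is a strictly $f$-degenerate transversal of $H$.
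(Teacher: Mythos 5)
Your proof is correct and follows essentially the same route as the paper: invoke \autoref{ORDER} to obtain an $f^{*}$-removing order of $R^{*}$ with $R \cap H_{K}$ at the right end, then concatenate the $R^{*} \setminus (R \cap H_{K})$ block with an $f$-removing order of $R$. The only difference is that you spell out the verification (no edges between the two "private" parts $A$ and $C$, and the identification of neighborhoods with those in $H_{1}[R]$ and $H^{*}[R^{*}]$) that the paper dismisses as "easy to check."
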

\begin{proof}
It suffices to give an $f$-removing order of $H[R \cup R^{*}]$. By \autoref{ORDER}, there exists an $f^{*}$-removing order of $R^{*}$ such that the vertices in $R \cap H_{K}$ are on the rightest of the order. Then we list all the vertices of $R^{*} \setminus (R \cap H_{K})$ according to the $f^{*}$-removing order and then list the vertices of $R$ according to an $f$-removing order. It is easy to check that the resulting order is an $f$-removing order for $H[R \cup R^{*}]$. 
\end{proof}

We first extend \autoref{NT} to the following result. Note that \autoref{F-NT} was first proved in \cite[Theorem 1.6]{MR4114324}, but the following proof is a little bit different from that one. 

\begin{theorem}\label{F-NT}
Assume that $G$ is a near-triangulation such that the outer face is bounded by a cycle $\mathcal{O}  = v_{1}v_{2}\dots v_{p}v_{1}$. Let $(H, f)$ be a valued cover of $G$ with $R_{f} \subseteq \{0, 1, 2\}$ such that 
\begin{equation}
f(v, 1) + \dots + f(v, s) \geq 3 \mbox{ for every $v \in V(\mathcal{O})$}
\end{equation}
and 
\begin{equation}
f(v, 1) + \dots + f(v, s) \geq 5 \mbox{ for every $v \notin V(\mathcal{O})$}. 
\end{equation}
If $R_{0}$ is a strictly $f$-degenerate transversal of $H[L_{v_{1}} \cup L_{v_{2}}]$, then $R_{0}$ can be extended to a strictly $f$-degenerate transversal of $H$. 
\end{theorem}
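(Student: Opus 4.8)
The plan is to mimic the proof of \autoref{NT} and argue by induction on $|V(G)|$, using \autoref{ORDER} and \autoref{F-EXTEND} to splice partial transversals together. The base case $|V(G)| = 3$ (where $G = \mathcal{O} = K_{3}$) will be the $m = 0$ instance of the generic deletion step below, so I treat it together with the no-chord case. At each step I either cut $G$ along a chord of $\mathcal{O}$ or delete the last boundary vertex $v_{p}$; both operations strictly decrease $|V(G)|$, and in each the near-triangulation structure together with the two degree hypotheses is inherited by the smaller instances, so the induction hypothesis applies.

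Suppose first that $\mathcal{O}$ has a chord $v_{i}v_{j}$, splitting $\mathcal{O} + v_{i}v_{j}$ into cycles $C_{1} \ni v_{1}v_{2}$ and $C_{2}$. I would apply the induction hypothesis to $\Int(C_{1})$ to extend $R_{0}$ to a strictly $f$-degenerate transversal $R_{1}$, which in particular fixes elements on $v_{i}$ and $v_{j}$. Then I would apply the induction hypothesis to $\Int(C_{2})$ \emph{with the original} $f$, using the restriction of $R_{1}$ to $\{v_{i},v_{j}\}$ as the seed on its two first boundary vertices; the seed is strictly $f$-degenerate because it is part of a valid transversal, and keeping the original $f$ means the budget $\geq 3$ at $v_{i},v_{j}$ is never violated. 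After deleting the $v_{i}v_{j}$-edges from the cover and setting $f^{*} = 1$ on the two seed elements, the transversal just produced on $\Int(C_{2})$ is strictly $f^{*}$-degenerate, so \autoref{F-EXTEND}, with $G_{1} = \Int(C_{1})$ and $G_{2} = \Int(C_{2})$, glues the two transversals into one for $G$.

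So assume $\mathcal{O}$ has no chord, and let $v_{1}, u_{1}, \dots, u_{m}, v_{p-1}$ be the neighbours of $v_{p}$ in cyclic order, so that $\mathcal{O}' = v_{1}u_{1}\cdots u_{m}v_{p-1}v_{p-2}\cdots v_{2}v_{1}$ is the outer cycle of the near-triangulation $G - v_{p}$. Deleting $v_{p}$ moves each $u_{i}$ from the interior onto $\mathcal{O}'$, lowering its requirement from $5$ to $3$ and freeing two units of budget. I would reserve two colors $j, \ell \in L(v_{p})$ with $f(v_{p}, j), f(v_{p}, \ell) \geq 1$ whose elements are \emph{not} matched to the fixed element of $v_{1}$ in $R_{0}$, and define $f'$ from $f$ by lowering, for each $u_{i}$, the values at the $\mathscr{M}_{v_{p}u_{i}}$-partners of $j$ and of $\ell$ by $1$ (clamped at $0$). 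Since these partners are distinct, this costs at most two units per $u_{i}$, so $\sum_{c} f'(u_{i},c) \geq 3$, and the induction hypothesis applies to $G - v_{p}$ under $f'$, extending $R_{0}$ to a transversal $R'$.

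It then remains to color $v_{p}$ and append it at the right end of an $f'$-removing order of $R'$. Placed last, $v_{p}$ has no right-neighbour, so any reserved color $c$ with $f(v_{p},c) \geq 1$ meets its own constraint; the partner-decrements pre-pay exactly the one new right-neighbour that $v_{p}$ contributes to each $u_{i}$, and the choice of $c$ avoids the partner of $v_{1}$, so $v_{1}$ gains no new neighbour. Finally $v_{p-1}$ is matched to at most one of $j,\ell$, so I can pick $c \in \{j,\ell\}$ avoiding it, and the extended order is a valid $f$-removing order for $G$. The step I expect to be the main obstacle is precisely this weighted reservation: because $f$ is valued in $\{0,1,2\}$, the bound $\sum_{c} f(v_{p},c) \geq 3$ guarantees two colors of positive value but not necessarily two such colors once the (possibly weight-$2$) partner of $v_{1}$ is removed. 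The delicate case is when $v_{1}$'s matched color carries weight $2$ and only one further positive color $c^{*}$ survives; here I expect to spend the slack of a weight-$2$ color by \emph{not} coloring $v_{p}$ strictly last, instead inserting it immediately to the left of the unique conflicting neighbour so that this neighbour becomes $v_{p}$'s single right-neighbour (allowed since $f(v_{p},\cdot)=2$ there) while all pre-paid $u_{i}$ remain to its left. Showing that such an insertion point always exists — equivalently, that the removing order delivered by the induction can be taken to place the relevant vertices consistently — is the crux that the weighted setting adds on top of Thomassen's argument.
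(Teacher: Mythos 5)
Your skeleton (chord case plus deletion of $v_{p}$, spliced via \autoref{ORDER} and \autoref{F-EXTEND}) is the paper's, but two steps fail, and the first is fatal as written. In the chord case you apply the induction hypothesis to $\Int(C_{2})$ with the \emph{original} $f$ and cover, and then assert that the resulting transversal $R_{2}$ becomes strictly $f^{*}$-degenerate once the $L_{v_{i}}$--$L_{v_{j}}$ edges are deleted and $f^{*}$ is set to $1$ on the seed. That implication is false: lowering $f$ from $2$ to $1$ at a seed vertex is a strictly stronger demand, and the induction gives you no control over it. Concretely, let $a=R_{1}\cap L_{v_{i}}$ with $f(a)=2$, and suppose the extension $R_{2}$ contains a vertex $c$ with $f(c)=1$ whose only neighbour in $R_{2}$ is $a$; this is perfectly compatible with $R_{2}$ being strictly $f$-degenerate (remove $a$ first), yet the subgraph on $\{a,c\}$ has no vertex of degree less than its $f^{*}$-value, so $R_{2}$ is not strictly $f^{*}$-degenerate and \autoref{F-EXTEND} cannot be invoked. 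Nor is this a bookkeeping slip that the gluing forgives: if on the $\Int(C_{1})$ side $a$ also has a neighbour $d\in R_{1}$ with $f(d)=1$, then $R_{1}\cup R_{2}$ contains the star $c$--$a$--$d$ in which every vertex has degree exactly its $f$-value, so the union genuinely fails to be strictly $f$-degenerate. The reason you took this detour --- that applying the induction directly to $(H^{*},f^{*})$ may drop the sum at $v_{i},v_{j}$ below $3$ --- is a real wrinkle, but the correct repair is different: the sum hypothesis is never actually used at the two precoloured vertices (nothing in either case of the proof touches the sums at $v_{1},v_{2}$), so one proves the theorem in that slightly weakened form and then legitimately applies the induction hypothesis to $(\Int(C_{2}),H^{*},f^{*})$ with seed $R_{1}\cap H[L_{v_{i}}\cup L_{v_{j}}]$, which is exactly the paper's route.

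In the no-chord case your main line is fine when two positive-weight colours of $L_{v_{p}}$ avoid the partner of $v_{1}$'s seed, but your sketched repair for the delicate case does not work. Say the partner $j_{0}$ of $v_{1}$'s seed has $f(v_{p},j_{0})=2$ and $c^{*}$ is the only other positive colour. If you colour $v_{p}$ with $j_{0}$, it can conflict with \emph{two} vertices that both lie to its right in any admissible order: the seed at $v_{1}$ (always, since $j_{0}$ is its partner, and $R_{0}$ sits rightmost) and the chosen vertex at $v_{p-1}$ (whenever $\mathscr{M}_{v_{p-1}v_{p}}$ matches it to $(v_{p},j_{0})$); in that event no insertion point leaves $v_{p}$ with a single right-neighbour, so ``insert immediately to the left of the unique conflicting neighbour'' is unavailable. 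The resolution (the paper's Case 1) is a post-induction choice between two pre-paid candidates: decrement the $u_{i}$'s for the partners of \emph{both} $(v_{p},j_{0})$ and $(v_{p},c^{*})$ (each $L_{u_{i}}$-sum drops by at most $2$, hence stays $\geq 3$), run the induction, and only then pick whichever candidate is not matched to the chosen vertex at $v_{p-1}$ --- a matching can hit at most one of them. That candidate has at most one right-neighbour when placed third from the right, namely $v_{1}$'s seed, and only in the sub-case where its weight is $2$, so the order extends. (The paper needs one further case, where after decrementing only one positive colour survives; there that colour is necessarily unmatched to $v_{1}$'s seed, its partners at the $u_{i}$'s are zeroed out, and it is inserted immediately before $v_{p-1}$'s chosen vertex.) So the ``crux'' you flag is real, and it is closed by keeping both colours alive until after the induction, not by the insertion trick alone.
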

\begin{proof}
We prove the assertion by induction on $|V(G)|$. When $G$ has exactly three vertices, $G = \mathcal{O} = K_{3}$ and the assertion is obvious. Then $|V(G)| \geq 4$ and the assertion is true for smaller graphs. Suppose that $\mathcal{O}$ has a chord $uw$. It follows that $uw$ lies in two cycles $C_{1}$ and $C_{2}$ of $\mathcal{O} + uw$ with $v_{1}v_{2}$ in $C_{1}$. Let $G_{1} : = \Int(C_{1})$ and $G_{2} : = \Int(C_{2})$. Applying the induction hypothesis to $G_{1}$, $R_{0}$ can be extended to a strictly $f$-degenerate transversal $R$ of $H_{1}$, and then $R \cap H[L_{u} \cup L_{w}]$ can be extended to a strictly $f^{*}$-degenerate transversal $R^{*}$ of $H^{*}$ as in \autoref{F-EXTEND}. Therefore, $R^{*} \cup R$ is a desired strictly $f$-degenerate transversal of $H$. 

The other case is that $\mathcal{O}$ has no chord. Let $v_{1}, u_{1}, u_{2}, \dots, u_{m}, v_{p-1}$ be the neighbors of $v_{p}$ in a natural cyclic order around $v_{p}$, and let $U = \{u_{1}, u_{2}, \dots, u_{m}\}$. Since all the bounded faces of $G$ are bounded by triangles and $\mathcal{O}$ has no chord, we have $P = v_{1}u_{1}u_{2} \dots u_{m}v_{p-1}$ is a path and $\mathcal{O}' = P \cup (\mathcal{O} - v_{p})$ is a cycle. For each $x \in \{v_{p}\} \times [s]$, let 
\[
f'(x) = 
\begin{cases}
\max\{0, f(x) - 1\}, & \text{if $x$ is adjacent to $R_{0} \cap L_{v_{1}}$ under $\mathscr{M}_{v_{1}v_{p}}$}; \\[0.5cm]
f(x), & \text{otherwise.}
\end{cases}
\]
Since $R_{0} \cap L_{v_{1}}$ has at most one neighbor in $L_{v_{p}}$, we have $f'(v_{p}, 1) +  \dots + f'(v_{p}, s) \geq 2$. Let 
\[
X' = \{\,x \in \{v_{p}\} \times [s] : f'(x) > 0\,\}.
\]

{\bf Case 1. $\bm{|X'| \geq 2}$}. 

Let $X^{*}$ be a subset of $X'$ with $|X^{*}| = 2$. A new function $f^{
\dag}$ on $H - L_{v_{p}}$ is defined as 
\[
f^{\dag}(x) = 
\begin{cases}
\max\{0, f(x) - 1\}, & \text{if $x \in U \times [s]$ and $x$ is connected to a vertex in $X^{*}$}; \\[0.5cm]
f(x), & \text{otherwise.}
\end{cases}
\]
It follows that, for each $u \in \mathcal{O}'$, we have 
\[
\sum_{z \in L_{u}} f^{\dag}(z) \geq 3.
\] 

By induction hypothesis and \autoref{ORDER}, $(H - L_{v_{p}}, f^{\dag})$ contains a strictly $f^{\dag}$-degenerate transversal $R^{\dag}$ with an $f^{\dag}$-removing order $S^{\dag}$ such that the vertices in $R_{0}$ are on the rightest of the order. Let $(v_{p}, c_{p})$ be a vertex in $X^{*}$ which is not adjacent to $R^{\dag} \cap L_{v_{p-1}}$. Therefore, we insert $(v_{p}, c_{p})$ into $S^{\dag}$ such that it is the reciprocal third element to obtain an $f$-removing order of a strictly $f$-degenerate transversal of $H$. 

{\bf Case 2. $\bm{|X'| = 1}$}.

Without loss of generality, assume that $X' = \{(v_{p}, 1)\}$. Since $f'(v_{p}, 1) + \dots + f'(v_{p}, s) \geq 2$ and $R_{f} \subseteq \{0, 1, 2\}$, we have $f'(v_{p}, 1) = 2$. Define a function $f^{
\dag}$ on $H - L_{v_{p}}$ by
\[
f^{\dag}(x) = 
\begin{cases}
0, & \text{if $x \in U \times [s]$ and $x$ is adjacent to $(v_{p}, 1)$ in $H$}; \\[0.5cm]
f(x), & \text{otherwise.}
\end{cases}
\]
Note that the range of $f$ is a subset of $\{0, 1, 2\}$, for each $u \in \mathcal{O}'$,  
\[
\sum_{z \in L_{u}} f^{\dag}(z) \geq 3.
\]
By induction hypothesis, $(H - L_{v_{p}}, f^{\dag})$ admits a strictly $f^{\dag}$-degenerate transversal $R^{\dag}$ with an $f^{\dag}$-removing order $S^{\dag}$ such that the vertices in $R_{0}$ are on the rightest of the order. Let $S$ be a sequence obtained from $S^{\dag}$ by inserting $(v_{p}, 1)$ into $S^{\dag}$ such that $(v_{p}, 1)$ is the immediate predecessor of $(v_{p-1}, c_{p-1})$, where $(v_{p-1}, c_{p-1}) \in L_{v_{p-1}} \cap R^{\dag}$. Recall that $f^{\dag}(v_{p}, 1) = 2$, it is not hard to check that $S$ is an $f$-removing order of a strictly $f$-degenerate transversal of $H$. 
\end{proof}

Instead of proving \autoref{F-MINOR}, we prove the following stronger theorem for $K_{5}$-minor-free graphs, and leave the corresponding result for $K_{3, 3}$-minor-free graphs to the readers. 

\begin{theorem}\label{F-M}
Assume that $G$ is a $K_{5}$-minor-free graph, and $(H, f)$ is a valued-cover with $R_{f} \subseteq \{0, 1, 2\}$. If $K$ is a subgraph isomorphic to $K_{2}$ or $K_{3}$, and $f(v, 1) + \dots + f(v, s) \geq 5$ for each $v \in V(G)$, then every strictly $f$-degenerate transversal of $H_{K}$ can be extended to a strictly $f$-degenerate transversal of $H$. 
\end{theorem}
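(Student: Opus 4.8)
The plan is to mirror the proof of \autoref{K5-FREE}, replacing \autoref{NT} by \autoref{F-NT} and replacing the trivial ``combine the two colorings'' step by \autoref{F-EXTEND}. First I would reduce to the case that $G$ is \emph{maximal} $K_{5}$-minor-free: adding a missing edge $xy$ together with the \emph{empty} matching $\mathscr{M}_{xy}$ adds no edge to the cover, so it leaves $R_{f}$, the inequalities $\sum_{i} f(v, i) \geq 5$, and $H[T]$ for every transversal $T$ all unchanged; hence a strictly $f$-degenerate transversal for the augmented instance is one for the original. I would then take a counterexample $G$ with $|V(G)|$ minimum and invoke \autoref{23SUMS} to split into three cases: $G$ is a plane triangulation, $G$ is the Wagner graph, or $G$ is a $2$-sum or $3$-sum of smaller maximal $K_{5}$-minor-free graphs $G_{1}$ and $G_{2}$ with $K \subseteq G_{1}$.

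For the plane-triangulation cases I would follow \autoref{K5-FREE} closely. If $K$ bounds a face (after first extending a precolored $K_{2}$ to an incident facial triangle, which is possible because the new vertex sees at most two already-chosen neighbours while $\sum_{i} f(v, i) \geq 5 > 2$), I redraw so that $K$ is the outer face, delete one precolored vertex $x_{3}$, and decrease $f$ by one on each neighbour of $x_{3}$ whose chosen vertex is matched to that of $x_{3}$. Every such neighbour then has $f$-sum at least $5 - 1 = 4 \geq 3$ and every other boundary vertex still has $f$-sum at least $3$, so \autoref{F-NT} applies to the near-triangulation $G - x_{3}$ with the precolored edge; I then reinsert $x_{3}$ at the \emph{right end} of the resulting $f$-removing order, which is legitimate precisely because the budgets of its matched neighbours were decreased to absorb the edge back to $x_{3}$. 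The separating-triangle case is handled by splitting $G$ into interior and exterior, extending on each side by minimality, and gluing via \autoref{F-EXTEND}.

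When $G$ is the Wagner graph I would argue greedily. Here $K \cong K_{2}$ (the Wagner graph is triangle-free) and $\Delta(G) = 3$. After coloring $K$ by the given transversal I process the remaining vertices one at a time; when $v$ is processed it has at most $\deg_{G}(v) \leq 3$ already-chosen neighbours, each contributing at most one matching edge into $L_{v}$, so at most $3$ forbidding edges in total. Since $\sum_{i} f(v, i) \geq 5 > 3$, a counting argument (if every $x \in L_{v}$ with $f(x) > 0$ had at least $f(x)$ forbidding edges, the total would be at least $5$) produces an $x \in L_{v}$ whose number of edges into the chosen set is strictly less than $f(x)$. Reading this as an $f$-coloring order with $K$ parked at the end yields the desired transversal.

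The hard part, and the step with no analogue in \autoref{K5-FREE} (where gluing two proper colorings is automatic), is the bookkeeping inside the gluing steps, above all the $2$-/$3$-sum case. There I would extend the given transversal of $H_{K}$ to a strictly $f$-degenerate transversal $R$ of $H_{1}$ by minimality, and then extend $R \cap H_{K_{\ell}}$ across the interface clique $K_{\ell} = G_{1} \cap G_{2}$ using \autoref{F-EXTEND}. The difficulty is that \autoref{F-EXTEND} demands a strictly $f^{*}$-degenerate transversal $R^{*}$ of $H^{*}$, where the interface edges are deleted and the budget on the chosen interface vertices is cut to $f^{*} = 1$; this is genuinely stronger than the strictly $f$-degenerate transversal of $H_{2}$ that minimality supplies directly, since an interface vertex of budget $2$ matched outward may satisfy the $f$-condition but fail the reduced $f^{*}$-condition, so one cannot simply reuse the transversal from $G_{2}$. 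The crux is therefore to manufacture $R^{*}$ honestly: I would apply the inductive statement (or \autoref{F-NT}, when $G_{2}$ is a triangulation) to $G_{2}$ with the interface vertices treated as precolored and their mutual edges suppressed, so that \autoref{ORDER} can legitimately park the interface at the right end of an $f^{*}$-removing order, and then feed $R$ and $R^{*}$ into \autoref{F-EXTEND} to obtain a strictly $f$-degenerate transversal of $H$, contradicting minimality. Arranging the reduced function $f^{*}$ and the deleted interface edges so that the inductive hypothesis can still be invoked on $G_{2}$ is where the real care lies.
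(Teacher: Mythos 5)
Your overall skeleton --- reduce to a maximal $K_{5}$-minor-free graph, take a minimal counterexample, split via \autoref{23SUMS} into the triangulation, Wagner, and $2$-/$3$-sum cases, and glue with \autoref{F-EXTEND} by applying minimality to $(G_{2}, H^{*}, f^{*})$ rather than to $(G_{2}, H_{2}, f)$ --- is exactly the paper's proof, and your treatment of the $2$-/$3$-sum, separating-triangle, and Wagner cases is sound. The genuine gap is in the facial-triangle case, at the step ``reinsert $x_{3}$ at the right end of the resulting $f$-removing order.'' The chosen vertex $(x_{3}, c_{3})$ may be matched in $H$ to the chosen vertices of $x_{1}$ or $x_{2}$, and those vertices lie in the final transversal. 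Concretely, suppose $(x_{1}, c_{1})$ is adjacent to $(x_{3}, c_{3})$ in $H$ with $f(x_{1}, c_{1}) = 1$ and $f(x_{3}, c_{3}) = 2$: this $R_{0}$ is strictly $f$-degenerate, yet in \emph{every} $f$-removing order of \emph{any} transversal containing both vertices, $(x_{3}, c_{3})$ must stand to the left of $(x_{1}, c_{1})$, since $(x_{1}, c_{1})$ can afford no right-neighbor. So placing $(x_{3}, c_{3})$ rightmost is impossible, not merely unjustified. Your stated remedy --- decrementing the budgets of the matched neighbors ``to absorb the edge back to $x_{3}$'' --- cannot be applied at $x_{1}, x_{2}$ either: in the same example it would force $f'(x_{1}, c_{1}) = 0$, after which no transversal through $(x_{1}, c_{1})$ is strictly $f'$-degenerate, so \autoref{F-NT} could not even be invoked.

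The paper resolves precisely this point differently, and it is the one step you would need to change. It sets $f'$ to \emph{zero} on the vertices matched to $(x_{3}, c_{3})$ only in lists $L_{u}$ with $u \notin \{x_{1}, x_{2}\}$ (so the transversal produced by \autoref{F-NT} contains \emph{no} neighbor of $(x_{3}, c_{3})$ outside $R_{0}$, rather than merely discounted ones), leaves $f$ untouched on $L_{x_{1}} \cup L_{x_{2}}$, and then inserts $(x_{3}, c_{3})$ not at the very end but so that the three vertices of $R_{0}$ occupy the last three positions arranged according to an $f$-removing order of $R_{0}$ itself --- such an order exists because $R_{0}$ is assumed strictly $f$-degenerate, and it automatically accounts for any $H$-edges between $(x_{3}, c_{3})$ and $(x_{1}, c_{1}), (x_{2}, c_{2})$; in the example above it places $(x_{3}, c_{3})$ third from the right. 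With that replacement your argument goes through, and the rest of your proposal, including the observation that \autoref{ORDER} applies to the interface of the sum decomposition exactly because $f^{*} = 1$ there and the interface cover-edges are deleted, matches the paper.
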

\begin{proof}
Suppose to the contrary that $(G, H, f, R_{0})$ is a counterexample with $|V(G)|$ as small as possible, where $R_{0}$ is a strictly $f$-degenerate transversal of $H_{K}$. Similar to the previous results, we only need to consider the case that $G$ is a maximal $K_{5}$-minor-free graph. 

Assume that $G$ is a plane triangulation and $K$ is a separating triangle of $G$. Note that $\Ext(K)$ and $\Int(K)$ are both plane triangulations and maximal $K_{5}$-minor-free graphs. By minimality and \autoref{F-EXTEND}, $R_{0}$ can be extended to a strictly $f$-degenerate transversal of $H$.

Assume that $G$ is a plane triangulation and $K = [x_{1}x_{2}x_{3}]$ bounds a $3$-face. We can redraw the plane triangulation such that $K$ bounds the outer face. Let $(x_{3}, c_{3})$ be in $R_{0}$, define a function $f'$ on $H - L_{x_{3}}$ by
\[
f'(x) = 
\begin{cases}
0, & \text{if $x \in \{u\} \times [s]$ with $u \notin \{x_{1}, x_{2}\}$ and $x$ is connected to $(x_{3}, c_{3})$ in $H$}; \\[0.5cm]
f(x), & \text{otherwise.}
\end{cases}
\]
Note that the graph $G - x_{3}$ is a near-triangulation. Since the range of $f$ is a subset of $\{0, 1, 2\}$, we have that, for each $w$ on the outer face of $G - x_{3}$, \[\sum_{x \in \{w\} \times [s]} f'(x) \geq 3.\] By \autoref{F-NT}, $R_{0}\setminus \{(x_{3}, c_{3})\}$ can be extended to a strictly $f'$-degenerate transversal of $H \setminus L_{x_{3}}$ with an $f'$-removing order $S'$ such that the two vertices in $R_{0}\setminus \{(x_{3}, c_{3})\}$ are on the rightest of the order. According to an $f$-removing order of $R_{0}$, we can insert $(x_{3}, c_{3})$ into $S'$ such that the three vertices in $R_{0}$ are the three rightest elements in the order to obtain an $f$-removing order of a strictly $f$-degenerate transversal of $H$. 

Assume that $G$ is a plane triangulation and $K = x_{1}x_{2}$. We may assume that $x_{1}x_{2}$ is incident with a $3$-face $[x_{1}x_{2}x_{3}]$. Clearly, $R_{0}$ can be extended to a strictly $f$-degenerate transversal of $H_{[x_{1}, x_{2}, x_{3}]}$, and we can reduce the problem to the previous case. 

If $G$ is the Wagner graph, then we can greedily extend $R_{0}$ to a strictly $f$-degenerate transversal of $H$ since $G$ is 3-regular. 

By \autoref{23SUMS}, assume that $G$ is a $2$-sum or $3$-sum of two maximal $K_{5}$-minor-free graphs $G_{1}$ and $G_{2}$ with $K \subset G_{1}$. By minimality and \autoref{F-EXTEND}, $R_{0}$ can be extended to a strictly $f$-degenerate transversal of $H$. 
\end{proof}

In Theorems \ref{F-NT} and \ref{F-M}, there is a restriction on $f$, \ie the range of $f$ is a subset of $\{0, 1, 2\}$. If the restriction can be dropped, the results can imply two theorems due to Thomassen. Thomassen proved that every planar graph can be partitioned into a $3$-degenerate graph and an independent set \cite{MR1866722}, and every planar graph can be partitioned into a $2$-degenerate graph and a forest \cite{MR1358992}. So the second author and some others made the following conjecture in \cite{MR4357325}. 

\begin{conjecture*}
Assume that $G$ is a planar graph and $(H, f)$ is a positive-valued cover. If $s \geq 2$ and $f(v, 1) +  \dots + f(v, s) \geq 5$ for each $v \in V(G)$, then $H$ admits a strictly $f$-degenerate transversal. 
\end{conjecture*}

\end{document}